\newcommand{\Reg}{\operatorname{Reg}}
\newcommand{\Spec}{\operatorname{Spec}}
\newcommand{\Ass}{\operatorname{Ass}}
\newcommand{\V}{\operatorname{V}}
\newcommand{\Z}{\operatorname{Z}}
\newcommand{\rr}{R\bowtie^f J}
\newcommand{\Nil}{\operatorname{Nil}}
\newcommand{\diam}{\operatorname {diam}}
\newcommand{\fm}{\frak{m}}
\newcommand{\fp}{\frak{p}}
\newcommand{\fq}{\frak{q}}
\newtheorem{thm}{Theorem}[section]
\newtheorem{cor}[thm]{Corollary}
\newtheorem{lem}[thm]{Lemma}
\newtheorem{prop}[thm]{Proposition}
\newtheorem{exam}[thm]{Example}
\newtheorem{rem}[thm]{Remark}
\begin{document}

\bibliographystyle{amsplain}

\date{}

\author{Y. Azimi}
\address{Department of Mathematics, University of Tabriz, Tabriz, Iran.}
\email{u.azimi@tabrizu.ac.ir}

\author{M.R. Doustimehr}
\address{Department of Mathematics, University of Tabriz, Tabriz, Iran;
	and School of Mathematics, Institute for Research in Fundamental Sciences (IPM), P.O. Box 19395-5746, Tehran, Iran.} 
\email{m\b{ }doustimehr@tabrizu.ac.ir}

\keywords{Amalgamated algebra, amalgamated duplication, graph of zero-divisor, trivial extension}

\subjclass[2010]{Primary 05C25, 13A15, 13B99}

\title[Zero-divisor graph of an amalgamated algebra]{The zero-divisor graph of an amalgamated algebra}

\begin{abstract}
	Let $R$ and $S$ be commutative rings with identity, $f:R\to S$ a ring homomorphism and $J$ an ideal of $S$. Then the subring $R\bowtie^fJ:=\{(r,f(r)+j)\mid r\in R$ and $j\in J\}$ of $R\times S$ is called the amalgamation of $R$ with $S$ along $J$ with respect to $f$.
	In this paper, we generalize and improve  recent results on the computation of the diameter of the zero-divisor graph of amalgamated algebras and obtain new results. In particular, we provide  new characterizations for completeness of  the zero-divisor graph of  amalgamated algebra, as well as, a complete description for the diameter of the zero-divisor graph of amalgamations in the special case of finite rings.
\end{abstract}

\maketitle

\section{Introduction}


The concept of the graph of the zero-divisors
of a ring $R$ was introduced by Beck in \cite{b}  and further studied by Anderson and Naseer in \cite{an}. In their work, all elements of the ring were vertices
of the graph. Then, in \cite{al}, Anderson and Livingston considered only the set of non-zero zero-divisors of $R$ as vertices. The definition
of Anderson and Livingston (which we adopt in this paper) is as follows.
The
\emph{zero-divisor graph} of $R$, denoted by $\Gamma (R)$, is the undirected graph
with vertices $\Z(R)^\bullet=\Z(R)\setminus \{0\}$, and for distinct
$x,y \in \Z(R)^\bullet$, the vertices $x$ and $y$ are adjacent if and only if $xy = 0$. 
Recall that the \emph{distance} between (connected) vertices $x$ and $y$, denoted by $d(x,y)$, is
the length of a shortest path connecting them.
The \emph{diameter} of a connected graph $G$, denoted by $\diam (G)$, is the supremum of the distances between vertices. By \cite[Theorem 2.3]{al}, $\Gamma (R)$ is connected and $\diam (\Gamma (R))\le 3$.

D'Anna, Finocchiaro, and Fontana in \cite{DFF} and \cite{DFF2} have
introduced the following construction. Let $R$ and $S$ be two
commutative rings with identity, let $J$ be an ideal of $S$ and let
$f:R\to S$ be a ring homomorphism. They introduced the following
subring (with standard component-wise operations)
$$R\bowtie^fJ:=\{(r,f(r)+j)\mid r\in R\text{ and }j\in J\}$$ of
$R\times S$, called the \emph{amalgamated algebra} (or \emph{amalgamation}) of $R$ with $S$ along $J$
with respect to $f$. This construction generalizes the amalgamated
duplication of a ring along an ideal (introduced and studied in
\cite{DF}). Moreover, several classical constructions such as Nagata's
idealization (cf. \cite[page 2]{Na}, the $R + XS[X]$ and
the $R+XS\llbracket X \rrbracket$ constructions can be studied as
particular cases of this new construction (see \cite[Example 2.5 and Remark 2.8]{DFF}). The construction has proved its worth 
providing numerous examples and counterexamples
in commutative ring theory.

Let $M$ be an $R$-module. In 1955, Nagata introduced
the \emph{idealization} of $M$ in $R$
(or the \emph{trivial extension} of $R$ by $M$),
denoted here by  $R\ltimes M$.
In \cite{as06}, Axtell and Stickles studied  the 
diameter and girth of the zero-divisor graph of 
a ring when extending to idealizations of the ring.

Recall that if $f:=id_R$ is the identity homomorphism
on $R$, and $I$ is an ideal of $R$, then $R\bowtie
I:=R\bowtie^{id_R} I$ is called the amalgamated duplication of $R$ along $I$.
In \cite{y}, Maimani and Yassemi investigated  
the diameter and girth of the zero-divisor graph of the ring $R\bowtie  I$.

Recently, in \cite{k}, Kabbaj and Mimouni
computed the diameter and girth of 
the zero-divisor graph for various contexts of  amalgamations.
 Then, in \cite{a},    previous results on the computation of the diameter of the zero-divisor graph of amalgamated algebras were
 improved and new results were obtained.

In this paper we pursue previous  works on the diameter 
of the zero-divisor graph of amalgamations.
Our attempt improves and recovers 
some of previous  results and 
yields new  results on amalgamated 
algebra, $n$-trivial extension (see definition
 in the
following) and
idealization.

The outline of the paper is as follows. In Section 2 
 we fix our notation and give some elementary results
 on which we base our approach.
In Section 3,  under certain assumptions, we
give equivalent conditions for completeness of 
 $\Gamma (\rr)$. 
Section 4  provides conditions under which 
the diameter of the zero-divisor graph of 
amalgamations is $2$ or $3$. 
In Section 5, we give a complete description of 
the diameter of the zero-divisor graph of 
amalgamations in the special case of finite rings.
Our results generalize  and complete 
well-known results mentioned above, and yield new and
original examples of graphs of
diameters $1$,  $2$, and $3$.

\section{Preliminaries}

To facilitate the reading of the paper, in this section, we recall  definitions
and prove some preliminary  properties we need later.

Let us first fix some notation which we shall use throughout the
paper: $R$ and $S$ are two commutative rings with identity, $J$ is a non-zero
proper ideal of the ring $S$, and $f:R\to S$ is a ring homomorphism. 
For a commutative ring $A$, the set of
zero-divisor elements, regular elements, nilpotent elements,
associate prime ideals,
and prime ideals of $A$ will be denoted by 
$\Z(A)$, $\Reg (A)$, $\Nil (A)$, $\Ass (A)$, and $\Spec (A)$, respectively. $\V(I)$ denotes the set of
prime ideals of $A$ containing $I$.
For a subset $E$ of $A$, by $E^\bullet$ we 
mean $E\setminus \{0\}$.
Basic references for graph theory are \cite{di} and \cite{har}; for commutative ring theory, see \cite{Atiyah}, \cite{M}, and \cite{kap}.

\begin{rem}\label{zdd}
	\textnormal{The authors in \cite{ass} investigate the behavior of 
	the set of zero-divisors of amalgamated algebras as follows: Let $Z_1= \{(r,f(r)+j)\mid  r\in \Z(R)\}$
	and $Z_2= \{(r,f(r)+j)\mid j'(f(r)+j)=0$ for some $ j'\in J^\bullet \}$.
	Then $Z_2 \subseteq \Z(R\bowtie^f J) \subseteq Z_1\cup Z_2$. 
	The amalgamated ring $R\bowtie^f J$ is said to have 
	the \emph{condition $(\star)$} if the equality
	$\Z(R\bowtie^f J)= Z_1\cup Z_2$ holds.
	The amalgamated duplication of a ring along an ideal
	as well as idealization of a module in a ring has the condition $(\star)$.}
\end{rem}

In the following, we  provide a sufficient condition on $J$ so that 
$\rr$ has condition $(\star)$. However, before
we proceed in this direction, we are going to introduce, at this early stage, the  notion of
$n$-trivial extension introduced recently in
\cite{AAAA}. Let $M=(M_i)_{i=1}^n$ be a family of $R$-modules and
$\varphi=\{\varphi_{i,j}\}$ be a family of bilinear maps such that
each $\varphi_{i,j}$ is written multiplicatively:
\begin{align*}
\varphi_{i,j}:& M_i\times M_j  \longrightarrow M_{i+j} \\
&(m_i,m_j) \longmapsto  \varphi_{i,j}(m_i,m_j):=m_im_j
\end{align*}
The $n$-trivial extension of $R$ by $M$ is the set denoted by
$R\ltimes_nM$ whose underlying additive group is $R\oplus
M_1\oplus\cdots\oplus M_n$ with multiplication given by
$$(m_0,\ldots,m_n)(m'_0,\ldots,m'_n)=\Big(\sum_{j+k=i} m_jm'_k\Big)_i$$
for all $(m_0,\ldots,m_n),(m'_0,\ldots,m'_n)\in R\ltimes_nM$. It is
shown that $R\ltimes_nM$ is associative ring precisely when
$(m_im_j)m_k=m_i(m_jm_k)$ for $m_i\in M_i, m_j\in M_j$ and $m_k\in
M_k$ with $1\leq i, j, k \leq n - 2$ and $i + j + k\leq n$. Further,
for $R\ltimes_nM$ to be a commutative ring with identity we need $R$
to be commutative with identity and $m_im_j=m_jm_i$ for every $1\leq
i, j \leq n-1$ with $i+j\leq n$. It can be seen that $(0\ltimes_n
M)^{n+1}=0$. It is also easy to show that the $n$-trivial extension
is a special case of the amalgamated construction. Indeed, let
$S:=R\ltimes_nM$, $J:=0\ltimes_nM$ and $f:R\to S$ be the natural
injection. It is now clear that $\varphi:R\bowtie^f J \to
R\ltimes_nM$, $(r,f(r)+(0,m_1,\ldots,m_n))\mapsto(r,m_1,\ldots,m_n)$
is an isomorphism.

\begin{lem}\label{star}
	 Let $J\subseteq \Nil (S)$.
		Then the amalgamated ring $R\bowtie^f J$ has
		the condition $(\star)$. In particular, the $n$-trivial extension of a ring by a family of $R$-modules
		 has the condition $(\star)$. 
\end{lem}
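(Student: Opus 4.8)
The plan is to use the containment recorded in Remark \ref{zdd}, namely $Z_2 \subseteq \Z(\rr) \subseteq Z_1\cup Z_2$. Since $Z_2$ already lies in $\Z(\rr)$, to establish condition $(\star)$ it suffices to prove the reverse inclusion $Z_1\subseteq \Z(\rr)$; that is, I must show that every element of the form $(r,f(r)+j)$ with $r\in\Z(R)$ is a zero-divisor of $\rr$.

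So fix such an element $x=(r,f(r)+j)$ and choose $t\in R^\bullet$ with $rt=0$. The guiding idea is to produce a nonzero annihilator of $x$, and the computation is driven throughout by the single identity $f(r)f(t)=f(rt)=0$. First I would test the candidate $(t,f(t))\in\rr$, which is nonzero because $t\neq 0$; a direct multiplication gives $x\,(t,f(t))=(0,\,jf(t))$. If $jf(t)=0$, this candidate already works and $x$ is a zero-divisor.

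The main obstacle is the remaining case $jf(t)\neq 0$, and this is precisely where the hypothesis $J\subseteq\Nil(S)$ is essential. Since $j$ is nilpotent, the chain $f(t),\,jf(t),\,j^2f(t),\dots$ eventually vanishes, so I may pick the largest integer $m\geq 1$ with $j^mf(t)\neq 0$. Then $(0,j^mf(t))$ is a nonzero element of $\rr$ (note $j^mf(t)\in J$ since $m\geq 1$), and expanding the product while using $f(r)\,j^mf(t)=j^mf(rt)=0$ together with $j^{m+1}f(t)=0$ shows $x\,(0,j^mf(t))=(0,0)$. Hence $x$ is a zero-divisor in every case, which gives $Z_1\subseteq\Z(\rr)$ and therefore condition $(\star)$.

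Finally, the \emph{in particular} claim is immediate from the identification of the $n$-trivial extension recorded above: taking $S=R\ltimes_nM$, $J=0\ltimes_nM$ and $f$ the natural injection, we have $J^{n+1}=(0\ltimes_nM)^{n+1}=0$, so every element of $J$ is nilpotent, i.e.\ $J\subseteq\Nil(S)$. The first part of the lemma then applies verbatim.
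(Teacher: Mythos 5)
Your proof is correct and takes essentially the same route as the paper: both reduce condition $(\star)$ to showing $Z_1\subseteq \Z(\rr)$ and produce an explicit annihilator, using $(t,f(t))$ in the easy case and, in the remaining case, the last nonzero term of a descending chain of multiples of the nilpotent $J$-component, killed by maximality of $m$ together with $f(r)f(t)=0$. Your version is marginally cleaner --- you case-split directly on whether $jf(t)=0$ and avoid the paper's auxiliary element $j'\in J$ with $f(s)j'\neq 0$ and its nested subcases --- but the underlying mechanism is identical.
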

\begin{proof}
Let $r\in \Z (R)^\bullet$ and $i\in J$. Then, for some 
$s \in R$,  $rs=0$. If $f(s)J=0$, then
$(r,f(r)+i)(s,f(s))=0$, as desired.
 Next assume that,
for some $j \in J$, $f(s)j\neq 0$.
If $i = 0$, then $(r, f (r))(s, f (s)) =0$. 
Assume that $i \neq 0$ and let $n$ be the smallest positive integer such that $i^n= 0$ and
$m \leq n-1$ the largest integer such that $f (s) ji^m \neq 0$.
 Then
 $(r,f(r)+i)(0,f(s)ji^m)=0$, and the desired
 result follows.
\end{proof}

In the next remark we recall the
structure of the prime spectrum of the ring $\rr$ and 
 use it in this paper without comments.
\begin{rem}\label{spec}
	\textnormal{(\cite[Corollary 2.5]{DFF1})
	For $\fp\in\Spec(R)$ and $\fq\in\Spec(S)\setminus \V(J)$, set
	\begin{align*}
	\fp^{\prime_f}:= & \fp\bowtie^fJ:=\{(p,f(p)+j)|p\in \fp, j\in J\}, \\[1ex]
	\overline{\fq}^f:= & \{(r,f(r)+j)|r\in R, j\in J, f(r)+j\in \fq\}.
	\end{align*}
	Then, the prime ideals of $R\bowtie^fJ$ are of the type $\overline{\fq}^f$ or $\fp^{\prime_f}$, for
	$\fq$ varying in $\Spec(S)\setminus \V(J)$ and $\fp$ in $\Spec(R)$.
	Thus $\rr$ does not have any 
	prime ideal of the form $\overline{\fq}^f$ if and only if
	$J\subseteq \Nil (S)$.}
\end{rem}

\section{When is the graph $\Gamma(R\bowtie^fJ)$ complete?}
In \cite[Theorem 2.1]{k}, Kabbaj and Mimouni characterize 
completeness of the zero-divisor graph
 of amalgamations under the assumption
that $f^{-1}(J)\neq 0$. This assumption is reasonable and
includes amalgamated duplications, 
but  the theorem exclude
idealizations as a special case of amalgamated algebras. The theorem also has the 
advantage of not assuming  $\Gamma (R)\neq \emptyset$ (i.e. $R$ can be a domain).  
In \cite[Theorem 3.3]{as06}, Axtell and Stickles characterize 
 completeness of a zero-divisor graph of  idealization of 
 a module $M$ in $R$, when $\Gamma (R)\neq \emptyset$.
 To this end, they state the following three properties:
\begin{itemize}
\item [(A)] $\Z(R)^2=0$.
\item [(B)] For every $r\in \Reg (R)$ and 
every $m\in M^\bullet$, $rm\neq 0$.	
\item [(C)] For every $r\in \Z (R)^\bullet$, $rM=0$.
\end{itemize} 
In this section, we give a characterization of 
completeness of the zero-divisor graph of amalgamations
with assumptions that include both amalgamated duplication and
idealization. This characterization
 generalizes \cite[Theorem 3.3]{as06}
and \cite[Theorem 3.8]{y}.
We first need to propose suitable version of properties (A), (B), and (C):
\begin{itemize}
	\item [(a)] $\Z(R)^2=0$.
	\item [(b)] For every $r\in \Reg (R)$ and every $j\in J^\bullet$,
	 $jf(r)\neq 0$.	
	\item [(c)] For every $r\in \Z (R)^\bullet$ and every $j\in J^\bullet$,
	$jf(r)= 0$.
	\item [(d)]	$J^2=0$.
\end{itemize} 
We use the above notation throughout this paper.

\begin{thm} \label{cm}
Let $\Gamma (R)\neq \emptyset$ and
suppose that $R\bowtie^f J$ 
has the condition $(\star)$. Then the following
 statements are equivalent: 
\begin{itemize}
\item [(1)]
	$\Gamma (R\bowtie^f J)$ is complete;
\item [(2)]
	Properties (a), (b), (c), and (d) hold;	
\item [(3)]
   $\Z (\rr)^2=0$.
\end{itemize}
In fact, the implications
$(2)\Rightarrow (3) \Rightarrow (1)$ hold without any 
assumption. 

\end{thm}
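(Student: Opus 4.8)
The plan is to establish the cycle $(1)\Rightarrow(2)\Rightarrow(3)\Rightarrow(1)$, observing that only the first implication will use the hypotheses $\Gamma(R)\neq\emptyset$ and condition $(\star)$. The implication $(3)\Rightarrow(1)$ is immediate: if $\Z(\rr)^2=0$, then any two distinct vertices $x,y\in\Z(\rr)^\bullet$ satisfy $xy=0$, so they are adjacent and the graph is complete; no assumption is needed.

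For $(2)\Rightarrow(3)$ I would first use (b), (c), (d) to pin down the zero-divisor set exactly. Under (d) we have $j'(f(r)+j)=j'f(r)$ for all $j,j'\in J$, so $(r,f(r)+j)\in Z_2$ if and only if $j'f(r)=0$ for some $j'\in J^\bullet$. Properties (b) and (c) together say precisely that, for a fixed $j'\in J^\bullet$, one has $j'f(r)=0\iff r\in\Z(R)$; hence $Z_2=\{(r,f(r)+j)\mid r\in\Z(R)\}=Z_1$. Since $Z_2\subseteq\Z(\rr)\subseteq Z_1\cup Z_2$ always holds by Remark \ref{zdd}, this forces $\Z(\rr)=Z_1=Z_2$, with no appeal to $(\star)$. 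It then remains to multiply two typical elements $(r,f(r)+j),(s,f(s)+k)$ with $r,s\in\Z(R)$: the first coordinate $rs$ vanishes by (a), while the second coordinate expands as $f(rs)+f(r)k+jf(s)+jk$, whose four terms vanish by (a), (c), (c), and (d) respectively. Thus $\Z(\rr)^2=0$, again without extra hypotheses.

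The substance of the theorem is $(1)\Rightarrow(2)$, where I assume $\Gamma(\rr)$ complete, $\Gamma(R)\neq\emptyset$, and $(\star)$, the latter guaranteeing $Z_1\subseteq\Z(\rr)$ so that the test elements below are genuine vertices. Fix $a\in\Z(R)^\bullet$, which exists since $\Gamma(R)\neq\emptyset$, and fix $j\in J^\bullet$. Property (c) falls out by applying completeness to the distinct vertices $(r,f(r))$ and $(0,j)$ for $r\in\Z(R)^\bullet$: their product is $(0,f(r)j)$, which is forced to be $0$. For (a), distinct $r,s\in\Z(R)^\bullet$ give $rs=0$ from $(r,f(r))(s,f(s))=0$, while the diagonal case $r=s$ is recovered by pairing the \emph{distinct} vertices $(r,f(r))$ and $(r,f(r)+j)$, whose product has first coordinate $r^2$. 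For (d), distinct $j,k\in J^\bullet$ give $jk=0$ from $(0,j)(0,k)=0$, and the diagonal $j^2=0$ is obtained by pairing $(a,f(a)+j)$ with $(0,j)$: the already-proven property (c) kills $f(a)j$, so the product's second coordinate is exactly $j^2$. Finally (b) is an assertion of non-vanishing, so I would argue by contradiction: if $jf(r)=0$ for some $r\in\Reg(R)$ and $j\in J^\bullet$, then $(r,f(r))(0,j)=0$ exhibits $(r,f(r))$ as a vertex, and completeness against the vertex $(a,f(a))$ (distinct since $a\in\Z(R)$ while $r\in\Reg(R)$) forces $ra=0$, contradicting the regularity of $r$ together with $a\neq0$.

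The main obstacle is precisely the two diagonal identities $r^2=0$ and $j^2=0$: completeness of a simple graph only yields $xy=0$ for \emph{distinct} vertices and is silent about loops. The device that overcomes this is the extra room inside the ideal $J$, which lets me split a single ring (or ideal) element across two distinct vertices — $(r,f(r))$ versus $(r,f(r)+j)$, and $(a,f(a)+j)$ versus $(0,j)$ — so that a forbidden ``square'' reappears as an honest edge. It is worth flagging where the standing hypotheses enter: condition $(\star)$ is what ensures that elements with first coordinate in $\Z(R)$ are actual zero-divisors, hence vertices, and $\Gamma(R)\neq\emptyset$ supplies the zero-divisor $a$ needed both for $j^2=0$ and for the contradiction establishing (b).
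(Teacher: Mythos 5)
Your proposal is correct and takes essentially the same approach as the paper: the same test vertices $(r,f(r))$, $(r,f(r)+j)$, $(0,j)$ (with condition $(\star)$ guaranteeing they are genuine vertices and $\Gamma(R)\neq\emptyset$ supplying the needed zero-divisor) drive $(1)\Rightarrow(2)$, and the same use of (b) and (d) with Remark~\ref{zdd} to force first coordinates into $\Z(R)$ drives $(2)\Rightarrow(3)$. The only cosmetic differences are that you split the diagonal cases $r^2=0$ and $j^2=0$ off separately where the paper handles them uniformly, and your intermediate identity $Z_1=Z_2=\Z(\rr)$ is a slightly stronger statement than the inclusion the paper actually needs.
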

\begin{proof}
$(1)\Rightarrow (2)$ Assume that $\Gamma (R\bowtie^f J)$ is complete.\\
(a) Let $r,s\in Z(R)^\bullet$ and pick an element $j\in J^\bullet$.
By Remark \ref{zdd}, $(r,f(r))$ and $(s,f(s)+j)$ are
(different) elements of  $\Z(\rr)$.
Thus $(r,f(r)) (s,f(s)+j)=0$ which implies that $rs=0$.
Hence $\Z(R)^2=0$.\\
(b)  Let $r\in \Reg (R)$ and $j\in J^\bullet$ and
suppose, on the contrary, that $jf(r)= 0$.	
Then $(r,f(r))\in \Z (\rr)$. Let $s\in \Z(R)^\bullet$.
 By assumption, 
$(r,f(r))(s,f(s))=0$ and so $rs=0$, a contradiction.\\
(c) Let $r\in \Z (R)^\bullet$  and $j\in J^\bullet$.
Then $(r,f(r))$ and $(0,j)$ are different elements
of $\Z (\rr)$ and so, by assumption,
$(r,f(r)) (0,j)=0$.
Thus $jf(r)=0$.\\
(d) Let $i$ and $j$ be (possibly equal) elements of
$J^\bullet$ and pick $r\in \Z(R)^\bullet$.
It follows from Remark \ref{zdd} that 
$(r,f(r))$, $(r,f(r)+j)$, and $(0,i)$
are different elements of $\Z (\rr)$.
Therefore $(r,f(r))(0,i)=0=(r,f(r)+j)(0,i)$ and so $ij=0$.
 
$(2)\Rightarrow (3)$ Assume that properties
(a), (b), (c) and (d) hold.
Let $(r,f(r)+i), (s,f(s)+j)\in \Z (\rr)$.
Properties (b) and (d) together with Remark \ref{zdd} yield 
$r,s\in \Z(R)$.
Then, using (a), (c) and (d) we have
$(r,f(r)+i)(s,f(s)+j)=0$.

$(3)\Rightarrow (1)$ This is trivial.
\end{proof}

\begin{cor}\label{y 3.8} 
	(\cite[Theorem 4.8]{y}).
	Let I be a non-zero ideal of R. Then the following are equivalent:
	\begin{itemize}
		\item [(1)]
		The graph $\Gamma(R\bowtie I)$ is a complete graph;
		\item [(2)]
		$\Z(R)^2=0$ and $I\subseteq \Z(R)$;	
		\item [(3)]
		$\Z (R\bowtie I)^2=0$.
	\end{itemize}
\end{cor}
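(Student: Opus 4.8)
The plan is to deduce this corollary from Theorem~\ref{cm} by specializing to the amalgamated duplication, i.e.\ taking $f=\mathrm{id}_R$, $S=R$, and $J=I$. By Remark~\ref{zdd} the amalgamated duplication always satisfies the condition $(\star)$, so that hypothesis of Theorem~\ref{cm} is automatic. The only remaining hypothesis of the theorem is $\Gamma(R)\neq\emptyset$, which the corollary does not assume; this is the point that will require separate care.

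First I would translate properties (a)--(d) into the language of the corollary. Since $f=\mathrm{id}_R$, property (b) reads $jr\neq0$ for all $r\in\Reg(R)$ and $j\in I^\bullet$, which is automatic (a regular $r$ cannot annihilate a nonzero $j$). I would then verify the unconditional equivalence
\[
\text{(a)}\wedge\text{(b)}\wedge\text{(c)}\wedge\text{(d)}
\quad\Longleftrightarrow\quad
\bigl(\Z(R)^2=0 \text{ and } I\subseteq\Z(R)\bigr).
\]
For the forward direction, (a) is the first clause, while (d) gives $j^2=0$ for every $j\in I^\bullet$, so $I\subseteq\Z(R)$. Conversely, assuming $\Z(R)^2=0$ and $I\subseteq\Z(R)$, both $I^2$ and every product $jr$ with $r\in\Z(R)^\bullet$, $j\in I$ lie in $\Z(R)^2=0$, which yields (c) and (d). Thus the corollary's condition (2) is exactly the theorem's condition (2) in this setting.

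With the translation in hand, the implications $(2)\Rightarrow(3)\Rightarrow(1)$ follow directly from the unconditional part of Theorem~\ref{cm} (with $\Z(\rr)$ now reading $\Z(R\bowtie I)$). It remains to prove $(1)\Rightarrow(2)$, and here I would split into two cases. If $\Gamma(R)\neq\emptyset$, all hypotheses of Theorem~\ref{cm} hold and its implication $(1)\Rightarrow(2)$ gives the result via the translation. If $\Gamma(R)=\emptyset$, i.e.\ $R$ is a domain, I would argue that (1) simply fails, so the implication holds vacuously: since condition $(\star)$ holds one computes $\Z(R\bowtie I)=\{(0,j)\mid j\in I\}\cup\{(r,0)\mid r\in I\}$, and for a nonzero \emph{proper} ideal $I$ the set $I^\bullet$ has at least two elements (an ideal of size $2$ in a domain would force $I=R$); then two distinct vertices $(0,j_1),(0,j_2)$ satisfy $(0,j_1)(0,j_2)=(0,j_1j_2)\neq0$, so $\Gamma(R\bowtie I)$ is not complete.

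The main obstacle is precisely the absence of the hypothesis $\Gamma(R)\neq\emptyset$ in the corollary: the domain case is not covered by Theorem~\ref{cm} and must be handled directly, the only subtle point being to rule out a size-$2$ ideal so that a non-adjacent pair of vertices is guaranteed to exist.
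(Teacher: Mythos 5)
Your proposal is correct, and its skeleton is the same as the paper's: specialize Theorem \ref{cm} to $f=\mathrm{id}_R$, $S=R$, $J=I$ (where condition $(\star)$ is automatic by Remark \ref{zdd}), note that $(2)\Rightarrow(3)\Rightarrow(1)$ need no hypotheses, and reduce the whole issue to the missing hypothesis $\Gamma(R)\neq\emptyset$ in the implication $(1)\Rightarrow(2)$. The difference is how that issue is settled: the paper disposes of the domain case in one line by citing an external result (\cite[Theorem 3.17]{a}) asserting that completeness of $\Gamma(R\bowtie I)$ forces $R$ not to be a domain, whereas you prove this directly --- computing $\Z(R\bowtie I)=\{(0,j)\mid j\in I\}\cup\{(r,0)\mid r\in I\}$ from condition $(\star)$ when $R$ is a domain, and ruling out $|I^\bullet|=1$ by showing a two-element ideal of a domain must be the whole ring, which contradicts the standing assumption that the ideal is proper. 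Your route buys self-containedness and exposes precisely where properness of $I$ matters (indeed $R=\mathbb{Z}_2$, $I=R$ gives $R\bowtie I\cong\mathbb{Z}_2\times\mathbb{Z}_2$, whose graph is complete while $I\not\subseteq\Z(R)$, so this point is not cosmetic); the paper's proof is shorter but opaque on exactly this subtlety. You also spell out the translation of properties (a)--(d) into ``$\Z(R)^2=0$ and $I\subseteq\Z(R)$,'' which the paper leaves implicit; your verification of it is correct.
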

\begin{proof}
	It is easy to see that if
	$\Gamma(R\bowtie I)$ is  complete, then $R$ is
	not a domain (see \cite[Theorem 3.17]{a}). 
	Thus assumptions of Theorem \ref{cm} hold.
\end{proof}

\begin{prop} \label{cm3}
	Let $J^2=0$. Then, the following
	conditions are equivalent: 
	\begin{itemize}
		\item [(1)]
		$\Gamma (R\bowtie^f J)$ is complete;
		\item [(2)]
		Properties (a), (b), and (c) hold;	
		\item [(3)]
		$\Z (\rr)^2=0$.
	\end{itemize}
\end{prop}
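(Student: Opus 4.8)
The plan is to reduce most of the equivalence to Theorem \ref{cm} and to isolate the single case that genuinely lies outside its scope. The key opening observation is that the standing hypothesis $J^2=0$ forces $J\subseteq\Nil(S)$, so by Lemma \ref{star} the ring $\rr$ automatically satisfies the condition $(\star)$; moreover $J^2=0$ is precisely property (d). Hence the only hypothesis of Theorem \ref{cm} that is not guaranteed here is $\Gamma(R)\neq\emptyset$.

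For the implications $(2)\Rightarrow(3)\Rightarrow(1)$ I would simply invoke the corresponding assumption-free implications of Theorem \ref{cm}. Since $J^2=0$ supplies property (d), assuming (a), (b), (c) here is the same as assuming (a)--(d) there; and $\Z(\rr)^2=0$ trivially forces completeness. So no case distinction is needed for these two arrows.

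The substance is therefore $(1)\Rightarrow(2)$, where I would split on whether $\Gamma(R)$ is empty. If $\Gamma(R)\neq\emptyset$, then with $(\star)$ in hand from Lemma \ref{star} the full hypotheses of Theorem \ref{cm} are met, and completeness of $\Gamma(\rr)$ yields (a), (b), (c) directly. The remaining case is $R$ a domain, so $\Z(R)^\bullet=\emptyset$; then (a) and (c) hold vacuously and only (b) must be proved. I would argue (b) by contradiction: if it fails there are $r\in\Reg(R)=R^\bullet$ and $j_0\in J^\bullet$ with $j_0f(r)=0$. Then both $(r,f(r))$ and $(r,f(r)+j_0)$ are nonzero zero-divisors of $\rr$ (each annihilated by $(0,j_0)$, the second computation using $j_0^2=0$), they are distinct because $j_0\neq 0$, yet their product equals $(r^2,f(r)^2)$, which is nonzero since $r^2\neq 0$ in the domain $R$. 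This contradicts completeness, establishing (b).

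The main obstacle is exactly this domain case: the proof of (b) in Theorem \ref{cm} chooses some $s\in\Z(R)^\bullet$ to test completeness against, and such an $s$ is unavailable when $R$ is a domain, so a fresh argument is required. The device that replaces the missing nonzero zero-divisor of $R$ is to test completeness against a second vertex built from the same $r$, namely $(r,f(r)+j_0)$, whose membership in $\Z(\rr)$ is precisely what $J^2=0$ guarantees.
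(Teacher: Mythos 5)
Your proposal is correct and takes essentially the same approach as the paper: the paper likewise reduces the whole of $(1)\Rightarrow(2)$ to property (b) and refutes $jf(r)=0$ by exactly your key device, namely that $(r,f(r))$ and $(r,f(r)+j)$ are distinct vertices of $\Gamma(\rr)$ (both annihilated by $(0,j)$, using $J^2=0$) whose product forces $r^2=0$, contradicting $r\in\Reg(R)$. The only difference is that the paper runs this argument uniformly for every $r\in\Reg(R)$, so your case split on whether $R$ is a domain, and the appeal to Theorem \ref{cm} in the non-domain case, is harmless but unnecessary.
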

\begin{proof}
	As in the
	proof of Theorem \ref{cm}, we only need to prove
	that if $\Gamma (R\bowtie^f J)$ is complete, then (b) holds. Let
	$\Gamma (R\bowtie^f J)$ be complete, 
	$r\in \Reg (R)$, $j\in J^\bullet$, and
	suppose on the contrary that $jf(r)= 0$.	
	Then $(r,f(r)), (r,f(r)+j)\in \Z (\rr)$. Thus $r^2=0$, a contradiction.
\end{proof}

As in
\cite[Remark 2.8]{DFF}, if $S:=R\ltimes M$, $J:=0\ltimes M$, and
$\iota:R\to S$ be the natural embedding, then $R\bowtie^\iota J\cong
R\ltimes M$ which maps the element $(r,\iota(r)+(0,m))$ to the
element $(r,m)$.
Thus idealization is 
a special case of amalgamation with $J^2=0$.
Therefore, the above corollary generalizes
and improves \cite[Theorem 3.3]{as06}, dropping the 
assumption $\Gamma (R)\neq \emptyset$.

\begin{cor}\label{as3.6} 
	(c.f \cite[Theorem 3.3]{as06}).
	Let $M$ be an $R$-module. Then
	$\Gamma (R\ltimes M)$ is complete if and only if
	properties (A), (B), and (C) hold.
\end{cor}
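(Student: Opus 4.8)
The plan is to deduce this directly from Proposition \ref{cm3}, using the identification of the idealization with an amalgamation already recorded just above the corollary, so that nothing is needed beyond translating properties (a)--(c) into (A)--(C).

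First I would set $S:=R\ltimes M$, $J:=0\ltimes M$, and take $f:=\iota$ the natural embedding, so that $R\bowtie^\iota J\cong R\ltimes M$ via $(r,\iota(r)+(0,m))\mapsto(r,m)$. Since $(0\ltimes M)^2=0$, we have $J^2=0$, so the hypothesis of Proposition \ref{cm3} is met, and that proposition applies \emph{without} requiring $\Gamma(R)\neq\emptyset$; this is precisely what lets us drop the extra assumption of \cite[Theorem 3.3]{as06}. Hence $\Gamma(R\ltimes M)$ is complete if and only if properties (a), (b), and (c) hold.

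It then remains to check that, under this identification, (a), (b), (c) are literally (A), (B), (C). Property (a) is identical to (A). For the other two, note that a nonzero element of $J$ is exactly $(0,m)$ with $m\in M^\bullet$, and that for $r\in R$ one has $f(r)=\iota(r)=(r,0)$; using the idealization multiplication $(r,m)(r',m')=(rr',rm'+r'm)$, I compute
$$jf(r)=(0,m)(r,0)=(0,rm),$$
which vanishes precisely when $rm=0$. Therefore the requirement $jf(r)\neq0$ for all $r\in\Reg(R)$ and $j\in J^\bullet$ (property (b)) coincides with $rm\neq0$ for all $r\in\Reg(R)$ and $m\in M^\bullet$ (property (B)), and the requirement $jf(r)=0$ for all $r\in\Z(R)^\bullet$ and $j\in J^\bullet$ (property (c)) coincides with $rM=0$ for all $r\in\Z(R)^\bullet$ (property (C)).

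There is essentially no obstacle here: the content is carried entirely by Proposition \ref{cm3}, and the corollary is a matter of dictionary. The only point demanding care is to record the product $jf(r)=(0,rm)$ correctly and to confirm the bijection $J^\bullet\leftrightarrow M^\bullet$, $(0,m)\leftrightarrow m$, so that the quantifiers appearing in (b) and (c) range over exactly the data appearing in (B) and (C).
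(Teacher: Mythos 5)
Your proof is correct and is essentially the paper's own argument: the paper obtains Corollary \ref{as3.6} in exactly this way, identifying $R\ltimes M$ with $R\bowtie^{\iota}(0\ltimes M)$, observing that $(0\ltimes M)^2=0$ so that Proposition \ref{cm3} applies without the hypothesis $\Gamma(R)\neq\emptyset$, and translating properties (a), (b), (c) into (A), (B), (C). Your explicit computation $jf(r)=(0,m)(r,0)=(0,rm)$ and the bijection $J^\bullet\leftrightarrow M^\bullet$ are precisely the (implicit) dictionary the paper relies on.
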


In the above theorem, we
assumed that $R$ is  not a  domain. In fact, if 
$R$ is a  domain, then the resulting statement is not
always true. This is illustrated by the following example.
\begin{exam}
	Let $R=\mathbb{Z}_2$, $S=\mathbb{Z}_{2}\times \mathbb{Z}_{2}$, and
	$J$ be the ideal generated by $(0,1)$. Let $f:R\to S$
	be the natural homomorphism. Then
	$\rr \cong \mathbb{Z}_{2}\times \mathbb{Z}_{2}$ 
	and so $\Gamma (\rr)$ is complete, while $J^2\neq 0$ 
	and $\Z (\rr)^2\neq 0$.
\end{exam}

The next example 
shows
that, in Theorem \ref{cm}, the hypothesis that $\rr$ satisfies condition ($\star$)  is necessary.
\begin{exam}\label{z4}
	Let $R=\mathbb{Z}_4$, $S=\mathbb{Z}_{4}[X]$,
	$J= X\mathbb{Z}_{4}[X]$, and $f:R\to S$ be the natural inclusion
	map. We have $f^{-1}(J)=0$,
	and so $\rr \cong f(R)+J=\mathbb{Z}_{4}[X]$. 
	Moreover $\rr$ dose not
	have the condition $(\star)$, 
	since $2\in \Z(R)$ but $(2,2+X)\notin \Z(\rr)$, 
	by Remark \ref{zdd}.\\
	Note that $\Z(\mathbb{Z}_{4}[X])= 2\mathbb{Z}_{4}[X]$ hence that $\Gamma (\rr)$ is complete.	
	It is easy to see that	$\rr$
	satisfies properties (a) and (b) but not 
	(c) and (d).
\end{exam}

\begin{exam}
	Let $R=\mathbb{Z}_2$,   $S=\mathbb{Z}_{2}[X]/(X^3)$,  and   $f:R\to S$ be the composition of natural 
	maps $\mathbb{Z}_{2} \to \mathbb{Z}_{2}[X] \to \mathbb{Z}_{2}[X]/(X^3)$.
	Let $x$ denote the class of $X$ modulo $(X^3)$ and consider the following cases: 
  \begin{itemize}
  	\item [(1)] $J= (x)$  and,	
  	\item [(2)] $J= (x)^2$.
  \end{itemize}  
In either case, it is clear that 
(a), (b), and (c) hold. Note that $f^{-1}(J)=0$,
hence that $\rr \cong f(R)+J=\mathbb{Z}_{2}[x]$.
Note also that 
$\rr$ has condition $(\star)$ and $\Gamma (R)= \emptyset$.

In the first case $J^2\neq 0$. Thus we
do not have conditions of Theorem \ref{cm},
nor do we  have conditions of Proposition \ref{cm3}.
It is easy to see that $\Z(\rr)^2\neq 0$ and 
$\Gamma (R\bowtie^f J)$ is not complete, as desired.\\
In the second case $J^2=0$. Thus by Proposition \ref{cm3},
$\Z(\rr)^2=0$ and
$\Gamma (R\bowtie^f J)$ is complete.
\end{exam}

We will prove the following characterization for
completeness of the zero-divisor graph of amalgamations
 in part (1) of Theorem \ref{finite}.
\begin{prop} \label{cmf}
Let $R\bowtie^f J$ have finitely many  elements
 and $\rr \ncong \mathbb{Z}_{2}\times \mathbb{Z}_{2}$.
 Then the following statements are equivalent: 
	\begin{itemize}
		\item [(1)]
		$\Gamma (R\bowtie^f J)$ is complete;
		\item [(2)]
		Properties (a), (b), (c), and (d) hold;	
		\item [(3)]
		$\Z (\rr)^2=0$.
	\end{itemize}
\end{prop}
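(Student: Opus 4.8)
The plan is to leverage Theorem~\ref{cm} and Proposition~\ref{cm3} as much as possible, reducing the finite case to a verification that the missing condition~(d) (namely $J^2=0$) comes for free once we exclude the degenerate ring $\mathbb{Z}_2\times\mathbb{Z}_2$. Since we already know from Theorem~\ref{cm} that the implications $(2)\Rightarrow(3)\Rightarrow(1)$ hold with no hypotheses at all, the entire content of the proposition is the single implication $(1)\Rightarrow(2)$. So first I would assume $\Gamma(\rr)$ is complete and try to extract properties (a), (b), (c), and (d).

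The first obstacle is that Theorem~\ref{cm} requires the condition~$(\star)$, which we are no longer assuming. However, in the finite setting this should be recoverable: a finite commutative ring is a product of local rings, and for such rings the nilradical behaves well. My approach would be to show that when $\rr$ is finite and $\Gamma(\rr)$ is complete, either $J\subseteq\Nil(S)$ (so that $\rr$ has condition~$(\star)$ by Lemma~\ref{star}, placing us back inside Theorem~\ref{cm}), or else the ring collapses to an exceptional case that must be ruled out. Concretely, I would examine the zero-divisor structure described in Remark~\ref{zdd}: completeness forces every pair of nonzero zero-divisors to multiply to zero, which is an extremely rigid condition. For a finite ring, if some element of $J$ is a non-nilpotent zero-divisor, I expect to produce two nonzero zero-divisors in $\rr$ whose product is nonzero (exploiting idempotents arising from the finite product decomposition), contradicting completeness unless $\rr\cong\mathbb{Z}_2\times\mathbb{Z}_2$.

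The main step, then, is the reduction to condition~$(\star)$, after which the proof of $(1)\Rightarrow(2)$ proceeds essentially verbatim as in Theorem~\ref{cm}. Properties (a), (b), (c) are derived exactly as there, using the explicit vertices $(r,f(r))$, $(s,f(s)+j)$, $(0,j)$ constructed from Remark~\ref{zdd} and the completeness of the graph. The derivation of (d), i.e.\ $ij=0$ for all $i,j\in J^\bullet$, uses the three distinct vertices $(r,f(r))$, $(r,f(r)+j)$, $(0,i)$ and the fact that their pairwise products vanish. The crucial point where the exclusion $\rr\ncong\mathbb{Z}_2\times\mathbb{Z}_2$ is needed is precisely in guaranteeing the \emph{existence} of the auxiliary elements: to run the argument for (a) and (d) I must pick $r\in\Z(R)^\bullet$ and $j\in J^\bullet$ simultaneously, and in the finite case the nonemptiness of $\Z(R)^\bullet$ follows once $R$ is not a field, while the hypothesis $\Gamma(R)\neq\emptyset$ of Theorem~\ref{cm} is no longer assumed.

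The hard part, which I expect to be the real obstacle, is handling the boundary case where $R$ \emph{is} a field (so $\Z(R)^\bullet=\emptyset$ and properties (a),(c),(d) become vacuous or must be argued differently). In that situation completeness of $\Gamma(\rr)$ is driven entirely by the $Z_2$ part of the zero-divisors, and I would need to show directly that completeness together with finiteness forces $J^2=0$ unless $\rr\cong\mathbb{Z}_2\times\mathbb{Z}_2$; this is exactly the configuration that the excluded isomorphism is designed to remove, since $\mathbb{Z}_2\times\mathbb{Z}_2$ has a complete (single-edge) zero-divisor graph yet fails $\Z(\rr)^2=0$ and $J^2=0$, as the earlier example already illustrates. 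Carefully isolating this degenerate local-ring configuration, and verifying that every other finite $\rr$ with a complete zero-divisor graph satisfies all four properties, is where the bulk of the case analysis will lie.
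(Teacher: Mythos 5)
Your plan is correct and yields a valid proof, but it is not the route the paper takes. The paper gives no self-contained proof of this proposition at all: it is deferred to parts (1) and (2) of Theorem \ref{fine}, whose engine is the Anderson--Livingston theorem \cite[Theorem 2.8]{al} (for any ring $A$, a complete zero-divisor graph forces $A\cong\mathbb{Z}_2\times\mathbb{Z}_2$ or $\Z(A)^2=0$). That citation gives $(1)\Rightarrow(3)$ in one stroke; then $\Z(\rr)$ is an ideal, hence by finiteness and Remark \ref{spec} the unique prime of $\rr$ is of the form $\fm^{\prime_f}$, which forces $J\subseteq\Nil(S)$ and $J^2=0$ (because $\{0\}\times J\subseteq\Z(\rr)$), and then Proposition \ref{cm3} --- crucially the $J^2=0$ variant, which carries no hypothesis $\Gamma(R)\neq\emptyset$ --- delivers (a), (b), (c). Your route instead re-proves the needed dichotomy by hand (a finite non-local $\rr$ splits as a nontrivial product, where two distinct zero-divisors with nonzero product exist unless both factors are $\mathbb{Z}_2$), concludes $J\subseteq\Nil(S)$, restores condition $(\star)$ via Lemma \ref{star}, and reruns Theorem \ref{cm}. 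This is more self-contained and makes explicit a point the paper leaves implicit, namely that excluding $\mathbb{Z}_2\times\mathbb{Z}_2$ is exactly what recovers condition $(\star)$; but it pays a price precisely where you predicted: Theorem \ref{cm} also requires $\Gamma(R)\neq\emptyset$, so the case of $R$ a finite field must be handled separately. That case does go through: there $\rr$ is local with maximal ideal $\{0\}\times J$, so $\Z(\rr)^\bullet=\{0\}\times J^\bullet$ (for $r\neq 0$ the element $f(r)+j$ is a unit, $f$ sending units to units also gives (b) for free, and (a), (c) are vacuous); completeness kills all products $ij$ with $i\neq j$, and for squares note that if $i^2\neq 0$ then $(0,i)$ and $(0,i+i^2)$ are distinct vertices, so completeness gives $i^2(1+i)=0$ with $1+i$ a unit, a contradiction --- hence $J^2=0$. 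Supplying that last computation is the one detail your write-up still owes; by contrast, the paper's detour through Proposition \ref{cm3} treats the field and single-vertex cases uniformly, at the cost of leaning on the external result \cite[Theorem 2.8]{al}.
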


\section{When is the graph $\Gamma(R\bowtie^fJ)$ of diameter $2$ or $3$?}

In \cite{as06}, Axtell and Stickles
 prove that if $R\ltimes M$ satisfies properties
(A), (B) but not (C), or
if $R\ltimes M$ satisfies properties
(B), (C) but not (A), then $\diam  (\Gamma(R\ltimes M))=2$
(\cite[Theorem 3.9]{as06} and \cite[Theorem 3.10]{as06}). 
In this section, we investigate when similar
 results hold for $R\bowtie^f J$ based on
properties (a), (b), and (c). Note that property 
(d) automatically holds whenever $\rr$ is an idealization.

Note that in either of above cases
$\Z(R)$ is  an ideal of $R$.
In \cite[Theorem 3.10]{k} and \cite[Theorem 3.1]{a},
under certain conditions including property (b),
the authors prove  that if
$\Z(R)$ is \emph{not} an ideal of $R$, then 
$\diam (\Gamma(\rr))=3$. It is natural to ask, whether
$\diam (\Gamma(\rr))=2$  provided that 
$\Z(R)$ is an ideal of $R$. The results
of this section provide a partial answer to this question.
Theorem \ref{id 3.9} and Theorem \ref{id 3.10}
provide conditions under which $\diam (\Gamma(\rr))=2$.
But Proposition \ref{ex} shows that it also may 
happen that $\diam (\Gamma(\rr))=3$ when 
$\Z(R)$ is  ideal.


\begin{thm}\label{id 3.9}
	Let $J\subseteq \Nil (S)$.
	If $\rr$ satisfies properties  (b)
	and (c) but not property (a),
	then  $\diam  (\Gamma(R\bowtie^f J))=2$.
\end{thm}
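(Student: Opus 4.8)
The plan is to prove the two inequalities $\diam(\Gamma(\rr))\ge 2$ and $\diam(\Gamma(\rr))\le 2$ separately, after first recording that the hypothesis $J\subseteq\Nil(S)$ forces condition $(\star)$ by Lemma \ref{star}, so that $\Z(\rr)=Z_1\cup Z_2$ in the sense of Remark \ref{zdd}. For the lower bound I would exploit the failure of property (a): pick $a,b\in\Z(R)^\bullet$ with $ab\neq 0$. Then $(a,f(a))$ and $(b,f(b))$ both lie in $Z_1$, hence are vertices, and the first coordinate $ab$ of their product is nonzero, so they are non-adjacent; in the degenerate situation $a=b$ I would instead use the distinct vertices $(a,f(a))$ and $(a,f(a)+k)$ for some $k\in J^\bullet$, which are non-adjacent for the same reason. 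Since $\Gamma(\rr)$ is connected with at least two vertices, this already yields $\diam(\Gamma(\rr))\ge 2$.

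The substance is the bound $\diam(\Gamma(\rr))\le 2$, and my first step would be a structural reduction of the vertex set. I claim that every vertex $(r,f(r)+i)$ has $r\in\Z(R)$. Indeed, if $r\in\Reg(R)$ then $(r,f(r)+i)\notin Z_1$, so $(r,f(r)+i)\in Z_2$ and $j'(f(r)+i)=0$ for some $j'\in J^\bullet$; writing $j'f(r)=-j'i$ and iterating gives $j'f(r)^n=(-1)^n j'i^n$, which vanishes once $i^n=0$ (possible since $i\in J\subseteq\Nil(S)$). As $r^n\in\Reg(R)$, the vanishing of $j'f(r^n)$ contradicts property (b). With every vertex having $r\in\Z(R)$, property (c) gives $f(r)J=0$ for all vertices, so for two vertices $(r,f(r)+i)$ and $(s,f(s)+j)$ the cross terms $f(r)j$ and $f(s)i$ vanish and their product reduces to $(rs,\,f(rs)+ij)$. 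Hence adjacency is equivalent to the clean pair of conditions $rs=0$ and $ij=0$.

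The decisive step is then the construction of a common neighbor. Given distinct vertices $v=(r,f(r)+i)$ and $w=(s,f(s)+j)$, I would pass to the ideal $(i,j)$ of $S$ generated by $i$ and $j$: since $i$ and $j$ are nilpotent, the ideal generated by these finitely many nilpotents is itself nilpotent, so there is a largest $M\ge 1$ with $(i,j)^M\neq 0$ (unless $i=j=0$). Choosing any $k\in (i,j)^M\setminus\{0\}\subseteq J^\bullet$, we get $ki,kj\in (i,j)^{M+1}=0$, and therefore the vertex $(0,k)$ satisfies $(0,k)\cdot v=(0,ki)=0$ and $(0,k)\cdot w=(0,kj)=0$, i.e.\ it is adjacent to both $v$ and $w$. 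In the degenerate case $i=j=0$ any $(0,k)$ with $k\in J^\bullet$ serves. This exhibits a path of length at most $2$ between any two distinct vertices, completing the proof that $\diam(\Gamma(\rr))=2$.

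The hard part, I expect, is precisely the existence of this common annihilator $k\in J^\bullet$ with $ki=kj=0$: a single global element killing all of $J$ need not exist when $J$ is nil but not nilpotent, so the argument must be localized to the finitely generated (hence nilpotent) subideal $(i,j)$. A secondary technical point is verifying that the chosen $(0,k)$ is genuinely distinct from $v$ and $w$; the only way it can coincide, say $v=(0,i)$ with $k=i$, forces $ij\in (i,j)^{M+1}=0$, in which case $v$ and $w$ are already adjacent and the distance-$2$ conclusion is immediate.
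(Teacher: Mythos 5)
Your proof is correct, and it follows the same overall skeleton as the paper's: the lower bound comes from the failure of (a), every vertex $(r,f(r)+i)$ is shown to have $r\in\Z(R)$, and then a common neighbor of the form $(0,k)$ with $k\in J^\bullet$ annihilating both $i$ and $j$ is produced. However, the two key steps are executed by genuinely different arguments. For the reduction to $r\in\Z(R)$, the paper writes $\Z(\rr)$ as a union of prime ideals and uses that $(0,i)$ is nilpotent, hence lies in every prime, to conclude $(r,f(r))\in\Z(\rr)$ before invoking $(\star)$ and (b); you instead work directly with the $Z_2$-condition $j'(f(r)+i)=0$, iterating it to get $j'f(r)^n=(-1)^n j'i^n=0$ and contradicting (b) --- more elementary, with no prime-ideal machinery. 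For the common neighbor, which is the real content of the proof, the paper produces $k$ as a monomial $i^aj^b$ through a three-case analysis on exponents (its Cases 1--3), whereas you take $k$ to be any non-zero element of the last non-vanishing power $(i,j)^M$ of the finitely generated, hence nilpotent, ideal $(i,j)\subseteq J$, so that $ki,kj\in(i,j)^{M+1}=0$ in one stroke. Your version eliminates the case analysis entirely, and it also patches two points the paper glosses over: the degenerate case $i=j=0$ (the paper's ``suitable power of $i$'' fails when $i=0$), and the verification that the middle vertex $(0,k)$ is actually distinct from the endpoints. What the paper's route buys is brevity in the lower bound (it simply quotes Theorem \ref{cm}) and a completely explicit monomial form for $k$; what yours buys is a cleaner and more robust existence argument for the common annihilator.
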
 
\begin{proof}
	We first observe, by
	  Theorem \ref{cm},  that
	$\diam  (\Gamma(R\bowtie^f J))\ge 2$, since 
	property (a) does not hold. Consider non-adjacent vertices
	$(r,f(r)+i)$ and $(s,f(s)+j)$ in $\Gamma(\rr)$.
	We claim that  $r,s\in \Z(R)$.
	Indeed, for some $\mathcal{L}\subseteq \Spec(R)$,	
	$(r,f(r)+i) \in \Z(\rr)=\cup_{\mathcal{P}\in \mathcal{L}} \mathcal{P}$, 
	and $(0,i)\in \Nil (R\bowtie^f J)=\cap_{\mathcal{P}\in \Spec (\rr)}\mathcal{P}$.
	Thus $(r,f(r)) \in \cup_{\mathcal{P}\in \mathcal{L}} \mathcal{P}=\Z(\rr)$, and so, by the property (b) and Remark \ref{zdd},
	$r\in \Z(R)$.
	If we prove that there exists $k\in J^\bullet$, such that
	 $ki=kj=0$, then we would have the path 
	 $(r,f(r)+i)-(0,k)-(s,f(s)+j)$ of length $2$ and
	 the assertion follows.
	
	First note that, if $ij=0$, then we  take $k$
	to be a suitable power of $i$. Next,
	since $J\subseteq \Nil (S)$, there exists
	a smallest $m\in \Bbb N$ such that $i^mj^m=0$.
	Choose  $n\in \Bbb N$
	such that $i^{m-1}j^n=0$ and $i^{m-1}j^{n-1}\neq 0$.
	Three cases arise:\\ \textbf{Case 1.} $n> m$. Then
	we have the path $i-i^{m-1}j^{n-1}-j$.\\ 
	\textbf{Case 2.} $n= m$ and $i^{m}j^{n-1}= 0$. Again
	we have the path in the previous case.\\ \textbf{Case 3.} $n= m$ and $i^{m}j^{n-1}\neq 0$.
	Take a $p\in \Bbb N$ 
	such that $i^pj^{m-1}=0$ and $i^{p-1}j^{m-1}\neq 0$. 
	Then $p>m$ and we have the path $i-i^{p-1}j^{m-1}-j$.
\end{proof}

\begin{cor}
	\begin{itemize}
		\item [(1)] Let $M=(M_i)_{i=1}^n$ be a family of $R$-modules. Assume that $R$ and each  $M_i$, 
		$1\le i \le n$, satisfy properties  (B)
		and (C) but not property (A).
		Then  $\diam  (\Gamma(R\ltimes_n M))=2$.		
		\item [(2)] \cite[Theorem 3.9]{as06}.
		Let $M$ be an $R$-module. 
		If $R\ltimes M$ satisfies properties  (B)
		and (C) but not property (A),
		then  $\diam  (\Gamma(R\ltimes M))=2$.
	\end{itemize}
\end{cor}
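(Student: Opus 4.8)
The plan is to derive both parts from Theorem \ref{id 3.9} by exhibiting the $n$-trivial extension as an amalgamation and then translating properties (A), (B), (C) into (a), (b), (c). Observe that part (2) is the special case $n=1$ of part (1): since $R\ltimes_1 M=R\ltimes M$ and properties (B), (C) for the single module $M_1=M$ coincide with the hypotheses of the classical statement, it suffices to prove part (1) and read off (2) as a byproduct.

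First I would recall, as explained before Lemma \ref{star}, that with $S:=R\ltimes_n M$, $J:=0\ltimes_n M$, and $f$ the natural injection, the map $\varphi\colon R\bowtie^f J\to R\ltimes_n M$ sending $(r,f(r)+(0,m_1,\ldots,m_n))$ to $(r,m_1,\ldots,m_n)$ is an isomorphism; hence it is enough to compute $\diam(\Gamma(R\bowtie^f J))$. Moreover $(0\ltimes_n M)^{n+1}=0$ gives $J^{n+1}=0$, so $J\subseteq\Nil(S)$ and Theorem \ref{id 3.9} will be applicable once its property hypotheses are checked.

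The heart of the argument is a direct computation inside the $n$-trivial extension. For $r\in R$ and $j=(0,m_1,\ldots,m_n)\in J$, writing $f(r)=(r,0,\ldots,0)$ and applying the multiplication rule, the degree-$i$ component of $j\,f(r)$ equals $rm_i$ for $1\le i\le n$ and is $0$ in degree $0$; thus $jf(r)=(0,rm_1,\ldots,rm_n)$. Consequently $jf(r)=0$ if and only if $rm_i=0$ for every $i$, while $jf(r)\neq 0$ if and only if $rm_i\neq 0$ for some $i$. I would then read off the translation: if each $M_i$ satisfies (B), then for $r\in\Reg(R)$ and $j\in J^\bullet$ some component $m_{i_0}$ is non-zero, whence $rm_{i_0}\neq 0$ and $jf(r)\neq 0$, so property (b) holds; if each $M_i$ satisfies (C), then for $r\in\Z(R)^\bullet$ every $rm_i=0$, so $jf(r)=0$ and property (c) holds; and (a) fails precisely because (A) fails, as both are the single statement $\Z(R)^2=0$. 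With properties (b), (c) holding but not (a), and $J\subseteq\Nil(S)$, Theorem \ref{id 3.9} yields $\diam(\Gamma(R\bowtie^f J))=2$, and transporting along $\varphi$ gives $\diam(\Gamma(R\ltimes_n M))=2$.

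The only point requiring care is the degree bookkeeping in the product $j\,f(r)$: one must verify that multiplying an element of $J$, which is supported in positive degrees, by $f(r)$, which is concentrated in degree $0$, merely rescales each graded piece by $r$ and creates no cross terms. Granting this, the matching of (B) and (C) with (b) and (c) is immediate and no further estimate is needed; in particular, the path-length analysis is entirely subsumed by Theorem \ref{id 3.9}, so I expect no genuine obstacle beyond this routine verification.
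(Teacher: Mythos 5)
Your proposal is correct and follows exactly the route the paper intends: realize $R\ltimes_n M$ as $R\bowtie^f J$ with $S=R\ltimes_n M$, $J=0\ltimes_n M$ (so $J^{n+1}=0$ gives $J\subseteq\Nil(S)$), check that $(B)$ and $(C)$ for each $M_i$ translate into $(b)$ and $(c)$ via the computation $jf(r)=(0,rm_1,\ldots,rm_n)$ while $(a)$ and $(A)$ are literally the same condition, and then invoke Theorem \ref{id 3.9}. Your explicit verification that the product $jf(r)$ involves only the $\varphi_{0,i}$ maps (the $R$-module structures) and no cross terms is the right point of care, and your reduction of part (2) to the case $n=1$ is also how the paper views it.
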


\begin{exam}
	Let $R=\mathbb{Z}[X]/(X^6)$, $S=\mathbb{Z}[X]/(X^2)$,
	$J=(X/X^2)$,  and   $f:R\to S$ be the   natural 
	homomorphism. Then  $\diam  (\Gamma(R\bowtie^f J))=2$,
	by Theorem \ref{id 3.9}.
\end{exam}

The
assumption $J\subseteq \Nil (S)$ in the above
theorem  is not superfluous.    
We need the following proposition 
to construct an example that shows this.
Recall that an ideal of a ring is called a 
regular ideal if it contains a regular element.

\begin{prop} \label{ex}
	Assume that  $\Z (R)$ is an ideal of $R$, and $J$
	and $f^{-1} (J)$ are regular ideals. 
	If for some non-zero $k\in J$ and some non-zero $f(x)+i\in f(R)+J$, we have $k(f(x)+i)= 0$,
	then $\diam  (R\bowtie^f J)=3$.
\end{prop}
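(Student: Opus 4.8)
The plan is to invoke the universal bound $\diam(\Gamma(\rr))\le 3$ (see \cite[Theorem 2.3]{al}) and exhibit two vertices at distance exactly $3$. Since $J$ and $f^{-1}(J)$ are regular, I would fix a regular element $s\in J$ of $S$ and a regular element $a\in f^{-1}(J)$ of $R$ (so $a\ne 0$ and $f(a)\in J$), and write $\theta:=f(x)+i\ne 0$, with $k\theta=0$ and $k\in J^\bullet$. These two regular elements produce two vertices with complementary, easily described neighbourhoods. Because $s$ is regular in $S$, a vertex $(r,f(r)+j)$ annihilates $(0,s)$ if and only if $f(r)+j=0$; hence the neighbours of $(0,s)$ are exactly the elements $(r,0)$ with $r\in f^{-1}(J)^\bullet$, and in particular $(0,s)$ is itself a zero-divisor since $(0,s)(a,0)=0$. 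Dually, because $a$ is regular in $R$, the neighbours of $(a,0)$ are exactly the elements $(0,\ell)$ with $\ell\in J^\bullet$.

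First I would record vertex-hood and non-adjacency for each candidate pair: the second endpoint will always be annihilated either by $(0,k)$ (using $k\theta=0$) or by an explicit witness built from $a$, so it is a genuine element of $\Z(\rr)^\bullet$, while regularity of $s$ (respectively of $a$) forces non-adjacency because the relevant coordinate is nonzero. The real content is ruling out a common neighbour, and here the argument splits according to the position of $x$ relative to $f^{-1}(J)$.

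If $x\in f^{-1}(J)$ (so $\theta\in J$), I would pair $(0,s)$ with $(a,\theta)$; if $x\notin f^{-1}(J)$ and no nonzero $r\in f^{-1}(J)$ satisfies $xr=0$, I would pair $(0,s)$ with $(x,\theta)$. In both cases a common neighbour, being a neighbour of $(0,s)$, must have the form $(r,0)$ with $r\in f^{-1}(J)^\bullet$, and adjacency to the second endpoint forces $ar=0$ (respectively $xr=0$); regularity of $a$ (respectively the case hypothesis) excludes this, so these vertices are at distance $3$.

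The remaining situation is the main obstacle: $x\notin f^{-1}(J)$ together with some $r_0\in f^{-1}(J)^\bullet$ satisfying $xr_0=0$, for then the pair $(0,s),(x,\theta)$ genuinely acquires the common neighbour $(r_0,0)$. To overcome it I would pivot to the pair $(a,0)$ and $(r_0,s)$, trading the $R$-side obstruction for a $J$-side one that regularity of $s$ kills outright: any common neighbour, being a neighbour of $(a,0)$, has the form $(0,\ell)$ with $\ell\in J^\bullet$, and adjacency to $(r_0,s)$ forces $s\ell=0$, impossible since $s$ is regular. The one point needing care is that $(r_0,s)$ really is a vertex; here I would use that $f(a)\in J$ gives $f(ax)=f(a)f(x)\in J$, so $ax\in f^{-1}(J)$, and $ax\ne 0$ because $a$ is regular and $x\ne 0$, whence the nonzero element $(ax,0)$ annihilates $(r_0,s)$ via $r_0(ax)=a(xr_0)=0$. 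Thus in every case I exhibit vertices at distance $3$, and combined with the bound above this yields $\diam(\Gamma(\rr))=3$.
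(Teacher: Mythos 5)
Your proof is correct, but it takes a genuinely different route from the paper's, and in fact proves a stronger statement. Both arguments rest on the same underlying mechanism: a regular element of $S$ inside $J$ forces neighbours of type $(r,0)$ with $r\in f^{-1}(J)^\bullet$, while a regular element of $R$ forces neighbours of type $(0,\ell)$ with $\ell\in J^\bullet$, and these two neighbourhood types cannot meet. The paper, however, splits on whether $x\in\Reg(R)$ or $x\in\Z(R)$: in the first case it exhibits the unshortenable path $(x,f(x)+i)-(0,k)-(a,0)-(0,j)$ with $a\in f^{-1}(J)\cap\Reg(R)$ and $j\in J$ regular, and in the second case it invokes the hypothesis that $\Z(R)$ is an ideal to replace $x$ by the regular element $x+a$ and then runs the identical argument on $(x+a,f(x)+i)$. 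You instead split on the position of $x$ relative to $f^{-1}(J)$, and your Case C pivot --- abandoning the pair $(0,s),(x,\theta)$, which genuinely can acquire the common neighbour $(r_0,0)$, in favour of the pair $(a,0),(r_0,s)$ with the annihilation witness $(ax,0)\in f^{-1}(J)^\bullet\times\{0\}$ --- is precisely what makes the hypothesis that $\Z(R)$ is an ideal unnecessary: nowhere in your three cases is it used. So your argument shows that regularity of $J$ and $f^{-1}(J)$ together with the single relation $k(f(x)+i)=0$ already force $\diam(\Gamma(\rr))=3$. What the paper's route buys is brevity and uniformity (after the shift by $a$, one path settles everything); what yours buys is genuine extra generality, at the cost of a three-way case analysis.
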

\begin{proof}
	Let $k$ and $f(x)+i$ be as assumption and $j\in J$
	be regular.
	Consider the following two cases.\\
	\textbf{Case 1.} $x\in \Reg (R)$. Then $(x,f(x)+i)$ is adjacent to $(0,k)$ and this type of 
	zero-divisors are the only ones that
	 $(x,f(x)+i)$ can be adjacent to them. 
	 On the other hand since $j\in \Reg (S)$, $(0,j)$ 
	 can be adjacent only to zero-divisors of 
	 type $(a,0)$. Let $a\in f^{-1}(J)\cap \Reg (R)$. 
	 Then one can not shorten  the following path:
	 $$(x,f(x)+i) - (0,k) - (a,0) -  (0,j).$$	
	\textbf{Case 2.} $x\in \Z (R)$.  Let $a\in f^{-1}(J)\cap \Reg (R)$. By assumption, 
	 $x+a\in \Reg (R)$.  Now consider the element 
	 $(x+a,f(x)+i)$, which is adjacent to $(0,k)$. The remaining of the proof is just as the case 1.
\end{proof}

Here is an example that shows how one can build graphs
 of diameter 3 using the above proposition. This
  example also shows that the
 assumption $J\subseteq \Nil (S)$ in 
Theorem \ref{id 3.9} is essential.

\begin{exam}\label{exam}
		Let $R= k[X,Y]/(X^3)$, $S= k[Y,Z]/(Z^2)$,
		$f: R \to S$
		be the composition of natural homomorphisms
		$k[X,Y]/(X^3)\to k[Y] \to k[Y,Z] \to k[Y,Z]/(Z^2)$,
		and  $J=(Y,Z)S$. Let
		$x, y, z$ denote congruent classes of $X, Y, Z$.
		Then $J$ and $f^{-1}(J)$ are regular ideals, and 
		 $\Z (R)$ is an ideal. Moreover, we have
		 $z(f(x)+z)=0$.		 
		Thus $\diam (\Gamma (R\bowtie^f J))=3$ by Proposition \ref{ex}.\\			
		Notice that $\rr$ satisfies properties (b)
		and (c) but not property (a).
		Note also that $J\nsubseteq \Nil (S)$.
\end{exam}

\begin{thm}\label{id 3.10}
	Let $J\subseteq \Nil (S)$.
	If $\rr$ satisfies properties  (b)
	and (a) but not property (c),
	then  $\diam  (\Gamma(R\bowtie^f J))=2$.
\end{thm}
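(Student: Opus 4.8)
The plan is to establish the two bounds $\diam(\Gamma(\rr))\ge 2$ and $\diam(\Gamma(\rr))\le 2$ separately. For the lower bound I would first observe that the failure of property (c) produces some $r\in\Z(R)^\bullet$, so $R$ is not a domain and $\Gamma(R)\neq\emptyset$; moreover $J\subseteq\Nil(S)$ forces $\rr$ to satisfy condition $(\star)$ by Lemma~\ref{star}. Hence Theorem~\ref{cm} applies, and since (c) fails the graph $\Gamma(\rr)$ cannot be complete. A connected non-complete graph has two non-adjacent vertices, so $\diam(\Gamma(\rr))\ge 2$.

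For the upper bound I would fix two non-adjacent vertices $(r,f(r)+i)$ and $(s,f(s)+j)$ and first reduce to the case $r,s\in\Z(R)$, exactly as in the proof of Theorem~\ref{id 3.9}: since $(0,i)\in\Nil(\rr)$ lies in every prime and $\Z(\rr)$ is a union of primes, one gets $(r,f(r))=(r,f(r)+i)-(0,i)\in\Z(\rr)\subseteq Z_1\cup Z_2$, and then property (b) together with Remark~\ref{zdd} rules out $r\in\Reg(R)$; likewise for $s$. Writing $a:=f(r)+i$ and $b:=f(s)+j$, property (a) then gives $rs=0$ and $f(r)^2=f(s)^2=f(r)f(s)=0$, so that $a,b$ are nilpotent, $(r,a)(s,b)=(0,ab)$, and non-adjacency means $ab\neq 0$.

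The key step — and the point where the argument must depart from Theorem~\ref{id 3.9}, since we can no longer use (c) to annihilate $f(r)$ and $f(s)$ — is the observation that, because $f(r)^2=f(s)^2=f(r)f(s)=0$, the binomial expansions collapse to $a^p=i^p+p\,f(r)i^{p-1}$ and $b^q=j^q+q\,f(s)j^{q-1}$, whence for all $p,q\ge 1$
\[
a^pb^q=i^pj^q+q\,f(s)i^pj^{q-1}+p\,f(r)i^{p-1}j^q\in J .
\]
Thus every mixed power $a^pb^q$ with $p,q\ge1$ genuinely lies in $J$. I would then pick $(p^*,q^*)$ maximal, in the coordinatewise order, in the finite nonempty set of pairs of positive integers $(p,q)$ with $a^pb^q\neq0$ (nonempty since $ab\neq0$, finite by nilpotency), and set $k:=a^{p^*}b^{q^*}$. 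Maximality gives $ka=a^{p^*+1}b^{q^*}=0$ and $kb=a^{p^*}b^{q^*+1}=0$, while $k\in J^\bullet$ by the display. Hence $(0,k)$ is a vertex adjacent to both $(r,a)$ and $(s,b)$, and it differs from each of them, for if $(0,k)$ equalled one then $(0,k)(s,b)=0$ (resp.\ $(0,k)(r,a)=0$) would force $(r,a)(s,b)=0$, contradicting non-adjacency. This produces the path $(r,a)-(0,k)-(s,b)$ of length $2$, so $\diam(\Gamma(\rr))\le 2$.

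The main obstacle is exactly the membership $a^pb^q\in J$: without property (c) one cannot discard the $f(r),f(s)$ contributions, and the entire argument hinges on extracting an annihilator $k$ that actually lies in $J$, so that $(0,k)$ is a genuine element (and vertex) of $\rr$. Property (a), through $f(r)^2=f(s)^2=f(r)f(s)=0$, is what makes this possible and at the same time guarantees the nilpotency of $a$ and $b$; the subsequent extraction of a maximal nonzero mixed power is then routine.
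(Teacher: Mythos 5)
Your proof is correct, and its key construction genuinely differs from the paper's. For the lower bound the paper simply exhibits the non-adjacent pair $(x,f(x))$ and $(0,k)$ supplied by the failure of (c) (where $f(x)k\neq 0$), whereas you deduce non-completeness from Theorem~\ref{cm}; both work, the paper's being more direct. For the upper bound, after the same reduction to $r,s\in\Z(R)$ (via nilpotence of $(0,i)$, condition $(\star)$ from Lemma~\ref{star}, Remark~\ref{zdd} and property (b)), the paper recycles the witness to the failure of (c): it takes $f(x)k\neq 0$, notes $f(r)f(x)k=f(s)f(x)k=0$ by (a), and then iteratively multiplies by $i$ and $j$ (the process terminating by nilpotency) until it reaches a middle vertex $(0,f(x)ki^mj^n)$ annihilating both endpoints. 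You instead build the common neighbor out of the endpoints themselves: the binomial collapse $a^p=i^p+p\,f(r)i^{p-1}$ and $b^q=j^q+q\,f(s)j^{q-1}$, valid because (a) kills $f(r)^2$, $f(s)^2$ and $f(r)f(s)$, shows every mixed power $a^pb^q$ with $p,q\ge 1$ lies in $J$, and a coordinatewise-maximal nonzero mixed power $a^{p^*}b^{q^*}$ then serves as the middle vertex. Your variant buys modularity: the failure of (c) is used only where it is truly needed (to rule out $\diam\le 1$), so your upper-bound argument in fact shows that (a), (b) and $J\subseteq\Nil(S)$ alone force $\diam(\Gamma(\rr))\le 2$; you also check explicitly that the middle vertex is distinct from the endpoints, a point the paper glosses over. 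The price is the extra computation needed to see $a^{p^*}b^{q^*}\in J$, which in the paper's construction is automatic since its middle element is by design a multiple of $k\in J$; conceptually, though, both proofs rest on the same termination-by-nilpotency idea of extracting a maximal nonzero product.
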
 
\begin{proof}
	Since  property (c) does not hold, 
	it follows that there exists $x\in \Z(R)^\bullet$
	and $k\in J^\bullet$ 
	  such that $f(x)k\neq 0$.
	Then
	$(0,k), (x,f(x))\in \Z(\rr)^\bullet$, and  
	$(x,f(x))(0,k)=(0,f(x)k)\neq 0$, which implies 
	$\diam  (\Gamma(R\bowtie^f J))\ge 2$. To
	prove the converse inequality, note that
	as in the proof of Theorem \ref{id 3.9}, for any $(r,f(r)+i), (s,f(s)+j)\in \Z(\rr)$
	one has $r,s\in \Z(R)$. Thus, using property (a),
	 we have $f(r)f(x)k=f(s)f(x)k=0$.
	If, in addition, $f(x)ki=f(x)kj=0$, we
	obtain the path $(r,f(r)+i)-(0,f(x)k) - (s,f(s)+j)$, as desired.
	If not, say $f(x)ki\neq 0$, we replace $f(x)k$ with
	$f(x)ki$. Repeating this procedure
	 (which terminate since $J\subseteq \Nil (S)$) we
	 obtain the path $(r,f(r)+i)-(0,f(x)ki^mj^n) - (s,f(s)+j)$
	 of length $2$, for suitable
	  $m,n\in \Bbb N_0$.
	Therefore $d((r,f(r)+i),(s,f(s)+j))\le 2$; thus
	 $\diam  (\Gamma(R\bowtie^f J))\le 2$.
\end{proof}
\begin{cor}\label{47}
	\begin{itemize}
		\item [(1)] Let $M=(M_i)_{i=1}^n$ be a family of $R$-modules. Assume that $R$ and each  $M_i$, 
		$1\le i \le n$, satisfy properties  (B)
		and (A) but not property (C).
		then  $\diam  (\Gamma(R\ltimes_n M))=2$.	
		(In fact we need \emph{only one} $M_i$ not to satisfy  property (C), which is much more available).	
		\item [(2)] \cite[Theorem 3.10]{as06}.
		Let $M$ be an $R$-module. 
		If $R\ltimes M$ satisfies properties  (B)
		and (A) but not property (C),
		then  $\diam  (\Gamma(R\ltimes M))=2$.
	\end{itemize}
\end{cor}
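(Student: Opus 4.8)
The plan is to derive both parts from Theorem \ref{id 3.10} by realizing the $n$-trivial extension as an amalgamation. Recall from the preliminaries that if we put $S:=R\ltimes_n M$, $J:=0\ltimes_n M$, and let $f:R\to S$ be the natural injection $r\mapsto(r,0,\ldots,0)$, then $\rr\cong R\ltimes_n M$. Since $(0\ltimes_n M)^{n+1}=0$, the ideal $J$ is nilpotent, so $J\subseteq\Nil(S)$ and the hypothesis of Theorem \ref{id 3.10} is available; as the isomorphism is a ring isomorphism, it induces a graph isomorphism, whence $\diam(\Gamma(R\ltimes_n M))=\diam(\Gamma(\rr))$. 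Thus everything reduces to checking that, under the stated module-theoretic assumptions, $\rr$ satisfies properties (a) and (b) but not (c).

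The heart of the argument is a single computation in $R\ltimes_n M$. For $r\in R$ and $j=(0,m_1,\ldots,m_n)\in J$ we have $f(r)=(r,0,\ldots,0)$, and the multiplication rule gives
\[
jf(r)=(0,m_1,\ldots,m_n)(r,0,\ldots,0)=(0,rm_1,rm_2,\ldots,rm_n),
\]
since the only nonzero component of $f(r)$ sits in degree $0$. With this identity the properties decouple across the summands $M_i$. Property (a) is literally property (A) for $R$, so it transfers at once. For property (b): given $r\in\Reg(R)$ and $j\in J^\bullet$, some component $m_{i_0}$ is nonzero, and since $M_{i_0}$ satisfies (B) we get $rm_{i_0}\neq0$; hence the degree-$i_0$ component of $jf(r)$ is nonzero and $jf(r)\neq0$, giving (b). For the failure of (c): if some $M_i$ fails (C) there are $r\in\Z(R)^\bullet$ and $m\in M_i$ with $rm\neq0$; taking $j$ to be the element with $m$ in degree $i$ and $0$ elsewhere yields $jf(r)\neq0$, so (c) fails. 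This is precisely where the parenthetical remark enters: a single nonzero product $rm$ already destroys property (c), so only one $M_i$ need violate (C).

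Having established (a), (b) and the negation of (c), together with $J\subseteq\Nil(S)$, Theorem \ref{id 3.10} applies and yields $\diam(\Gamma(R\ltimes_n M))=2$, proving part (1). Part (2) is then just the case $n=1$: here $R\ltimes_1 M=R\ltimes M$ is the ordinary idealization, $J=0\ltimes M$ satisfies $J^2=0$ (so the nilpotency hypothesis is automatic), and the translation above specializes to properties (A), (B), (C) verbatim; thus part (1) recovers \cite[Theorem 3.10]{as06}.

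I do not anticipate a genuine obstacle, since the result is a specialization of Theorem \ref{id 3.10}; the only point demanding care is the explicit multiplication in $R\ltimes_n M$ and the observation that, because $f(r)$ is concentrated in degree $0$, the product $jf(r)$ is computed componentwise as $(0,rm_1,\ldots,rm_n)$. This componentwise form is exactly what lets the module-level hypotheses (B) and $\neg$(C) on the individual $M_i$ assemble into the ring-level properties (b) and $\neg$(c), and also explains why a single failure of (C) among the $M_i$ suffices.
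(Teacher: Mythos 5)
Your proof is correct and follows exactly the route the paper intends: the paper states Corollary \ref{47} without proof as an immediate consequence of Theorem \ref{id 3.10}, using the identification $R\ltimes_n M\cong R\bowtie^f J$ with $S=R\ltimes_n M$, $J=0\ltimes_n M\subseteq\Nil(S)$ from the preliminaries. Your explicit computation $jf(r)=(0,rm_1,\ldots,rm_n)$ and the resulting translation of (A), (B), $\neg$(C) into (a), (b), $\neg$(c) — including the observation that a single $M_i$ violating (C) suffices — is precisely the verification the paper leaves implicit.
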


\begin{cor}
	Let $R$ be a ring with $\Z(R)^2=0$. Then  
	$\diam (\Gamma (R[X]/(X^n)))=2$.
\end{cor}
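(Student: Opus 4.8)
The plan is to realize $R[X]/(X^n)$ as an $n$-trivial extension and then invoke the machinery just established, rather than arguing from scratch. Specifically, I would set $M_i := R \cdot x^i$ for $1 \le i \le n-1$ (where $x$ denotes the class of $X$), so that as an $R$-module $R[X]/(X^n) = R \oplus M_1 \oplus \cdots \oplus M_{n-1}$, and the ring multiplication is exactly the $(n-1)$-trivial extension multiplication with the bilinear maps $\varphi_{i,j}(rx^i, sx^j) = rs\,x^{i+j}$ (interpreted as $0$ when $i+j \ge n$). Thus $R[X]/(X^n) \cong R \ltimes_{n-1} M$ with $M = (M_i)_{i=1}^{n-1}$, and by the discussion following Lemma~\ref{star} this is an amalgamation with $J \subseteq \Nil(S)$.

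The next step is to verify the three hypotheses of Corollary~\ref{47}(1): each $M_i \cong R$ as an $R$-module must satisfy (B) and (A) with at least one failing (C). Property (A) for $R$ is precisely $\Z(R)^2 = 0$, which is our hypothesis. Property (B) asks that for every regular $r \in \Reg(R)$ and every nonzero $m \in M_i^\bullet$, $rm \ne 0$; since $M_i \cong R$ via $rx^i \leftrightarrow r$ and a regular element is a non-zero-divisor, $r \cdot (sx^i) = (rs)x^i \ne 0$ whenever $s \ne 0$, so (B) holds. Finally I must check that (C) fails for at least one $M_i$: property (C) would demand $r M_i = 0$ for every $r \in \Z(R)^\bullet$, but taking the identity contribution $1 \in M_1$ (i.e. $x \in M_1$) and any $r \in \Z(R)^\bullet$ gives $r \cdot x = rx \ne 0$, so (C) fails for $M_1$. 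By the parenthetical strengthening in Corollary~\ref{47}(1), a single $M_i$ failing (C) suffices, so the hypotheses are met and $\diam(\Gamma(R \ltimes_{n-1} M)) = 2$.

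One case needs separate attention: Corollary~\ref{47} presupposes that the relevant graph is nonempty and genuinely has diameter $2$, which implicitly requires $\Z(R)^\bullet \ne \emptyset$ or at least that the zero-divisor structure is nontrivial. If $R$ is a domain, then $\Z(R)^2 = 0$ holds vacuously, but property (C) must still fail for the argument; since $x \in M_1$ is annihilated by no nonzero element of $R$ yet $x^2, \ldots$ are nilpotent, the graph $\Gamma(R[X]/(X^n))$ is still nonempty (e.g. $x^{n-1}$ is a nonzero nilpotent) provided $n \ge 2$, and the diameter computation goes through. I would note that for $n \ge 2$ the element $x^{n-1}$ is a nonzero zero-divisor, guaranteeing $\Gamma \ne \emptyset$.

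The main obstacle I anticipate is purely bookkeeping: confirming that the bilinear maps $\varphi_{i,j}$ arising from truncated polynomial multiplication satisfy the associativity and commutativity constraints required for $R \ltimes_{n-1} M$ to be a commutative ring with identity, and matching the index conventions so that the isomorphism $R[X]/(X^n) \cong R \ltimes_{n-1} M$ is clean. These are routine but must be stated, since Corollary~\ref{47}(1) is phrased for an abstract $n$-trivial extension and the content of the corollary is simply transporting that conclusion along the isomorphism. Once the identification is made, the proof is a direct citation of Corollary~\ref{47}(1) with the verification of (A), (B), and the failure of (C) as above.
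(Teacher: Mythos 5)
Your core argument is essentially the paper's own, just packaged one layer up: the paper realizes $R[X]/(X^n)$ directly as an amalgamation, taking $S=R[X]/(X^n)$, $J=(x)$ and $f\colon R\to S$ the natural map, so that $f^{-1}(J)=0$ gives $\rr\cong f(R)+J=R[X]/(X^n)$, and then applies Theorem \ref{id 3.10}; you realize it as the $(n-1)$-trivial extension $R\ltimes_{n-1}M$ with $M_i=Rx^i$ and apply Corollary \ref{47}(1), which is itself just Theorem \ref{id 3.10} specialized to trivial extensions. The verifications of (A)/(a), (B)/(b), and the failure of (C)/(c) are the same in both routes, so the difference is pure bookkeeping; if anything the paper's identification is lighter, since it avoids checking the $n$-trivial-extension axioms and index conventions that you correctly anticipate as the main overhead of your route.

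There is, however, one genuine error in your proposal: the paragraph on the domain case. If $R$ is a domain, then $\Z(R)^\bullet=\emptyset$, property (C) holds vacuously for every $M_i$, and Corollary \ref{47}(1) (equivalently, the failure of (c) required in Theorem \ref{id 3.10}) is simply unavailable; your claim that ``the diameter computation goes through'' is false in general. For a domain $R$ and $n=2$ one has $\Z(R[X]/(X^2))=Rx$ and any two elements of $Rx$ multiply to zero, so $\Gamma(R[X]/(X^2))$ is a complete graph of diameter at most $1$ (e.g.\ $\mathbb{Q}[X]/(X^2)$), not $2$. So the domain case cannot be patched by pointing to nonzero nilpotents: the conclusion itself fails there when $n=2$ (for $n\ge 3$ it happens to remain true, but not by the cited machinery). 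The statement must be read with the implicit hypothesis that $R$ is not a domain, i.e.\ $\Z(R)\ne 0$, which is exactly what makes (c) fail; to be fair, the paper's own proof is silent on this point and shares the same blind spot, so apart from the incorrect attempted patch your proposal matches it.
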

\begin{proof}
	Let $S=R[X]/(X^n)$, $J= (X/X^n)$, and $f: R \to S$
	be the natural homomorphism. Then $f^{-1}(J)=0$,
	and so $\rr \cong f(R)+J=R[X]/(X^n)$. 
	By Theorem \ref{id 3.10}, 
	$\diam (\Gamma (R[X]/(X^n)))=2$.
\end{proof}
\begin{exam}
	Let $R=\mathbb{Z}_4$, $S=\mathbb{Z}_{4}[X]/(X^3)$,
	$J=(X/X^3)$, and $f:R\to S$ be the natural homomorphism.
	Then $\diam (\Gamma (\rr))=2$.
\end{exam}

Example \ref{z4} shows that in the above theorem
the  assumption $J\subseteq \Nil (S)$ is  crucial. In that 
case we have $J\nsubseteq \Nil (S)$ and $\diam  (\Gamma(R\bowtie^f J))=1$.
The next example provides conditions under which the hypothesis $J\subseteq \Nil (S)$ fails and $\diam  (\Gamma(R\bowtie^f J))=3$.

\begin{exam}
	Let $R$ be a ring which is not a domain and $\Z(R)^2=0$.
	If $I$ is regular ideal of $R$,
	then by \cite[Corollary 3.2]{a} $\diam  (\Gamma(R\bowtie I))=3$.
	Note that $R\bowtie I$
	satisfies  properties (a)
	and (b) but not  (c) and (d).
	To construct a concrete example, one can
	 consider $R=\Bbb Z[X]/(X^2)$, and $I=(\bar 2,x)R$,
	where $\bar 2$ and $x$ denote the classes of $2$ and $X$ modulo $(X^2)$.
\end{exam}

\section{Finite ring case}

In \cite[Theorem 2.2]{al}, Anderson and Livingston state
that $\Gamma (R)$
is finite if and
only if either $R$ is finite or an integral domain.
Thus $\Gamma (R)$  may be infinite. But it
is probably of most interest when it is finite, for
then one can draw it.

In this section, we deal with the case that
 the ring is  finite  (i.e has
finitely many elements).
Assume that $\Z(R)=\cup_{\fp\in \Ass (R)} \fp$ is an ideal.
Then $\Z(R)$ is a prime ideal and, since $R$ is finite,
it is the
unique maximal ideal of $R$. Thus the set of zero-divisors of
a finite ring $R$ is an ideal
if and only if $R$ has only one prime ideal.

To shorten and ease reading of 
the  proof of our last main result
we state the following elementary lemmas.
\begin{lem}\label{3}
	Assume that $\rr$ is a finite ring. Then
	$\Z (\rr)$ is an ideal if and only if
	$\Z (R)$ is  an ideal and $J\subseteq \Nil (S)$.
\end{lem}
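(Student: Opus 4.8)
The plan is to convert the ideal-theoretic condition into a counting statement about prime ideals and then read it off from the spectrum description in Remark \ref{spec}. First I would observe that the first-coordinate projection $p_R:\rr\to R$, $(r,f(r)+j)\mapsto r$, is a surjective ring homomorphism with kernel $0\times J:=\{(0,j)\mid j\in J\}$, so that $R\cong\rr/(0\times J)$. In particular $R$ is a homomorphic image of $\rr$, hence finite whenever $\rr$ is. Applying the criterion from the discussion preceding the statement to both finite rings $R$ and $\rr$, the hypotheses ``$\Z(R)$ is an ideal'' and ``$\Z(\rr)$ is an ideal'' are equivalent, respectively, to ``$R$ has a unique prime ideal'' and ``$\rr$ has a unique prime ideal''. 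Thus the lemma reduces to showing that $\rr$ has a unique prime ideal if and only if $R$ has a unique prime ideal and $J\subseteq\Nil(S)$.

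Next I would record two facts about the two families of primes listed in Remark \ref{spec}. The crucial one is a containment bookkeeping with $0\times J$: every prime of type $\fp^{\prime_f}$ contains $0\times J$ (take $p=0$ in the description of $\fp^{\prime_f}$), whereas no prime of type $\overline{\fq}^f$ does, since $\fq\notin\V(J)$ yields some $j\in J$ with $j\notin\fq$, so that $(0,j)\notin\overline{\fq}^f$. Hence the two families are \emph{disjoint}. The second fact is that the assignment $\fp\mapsto\fp^{\prime_f}$ coincides with $\fp\mapsto p_R^{-1}(\fp)$ under the surjection above, so it is injective and places the primes of type $\fp^{\prime_f}$ in bijection with $\Spec(R)$.

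With these in hand the two implications are short. For the reverse direction, if $J\subseteq\Nil(S)$ then by Remark \ref{spec} every prime of $\rr$ is of type $\fp^{\prime_f}$, and combining this with $|\Spec(R)|=1$ gives exactly one prime of $\rr$. For the forward direction, suppose $\rr$ has a unique prime. If $J\not\subseteq\Nil(S)$, Remark \ref{spec} guarantees a prime of type $\overline{\fq}^f$, while $R$ (finite and nonzero) has a prime, producing one of type $\fp^{\prime_f}$; by disjointness these are distinct, contradicting uniqueness. Therefore $J\subseteq\Nil(S)$, and then the bijection $\fp\mapsto\fp^{\prime_f}$ forces $|\Spec(R)|=1$, i.e.\ $\Z(R)$ is an ideal.

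I expect the only genuinely delicate point to be the disjointness of the two prime families together with the injectivity of $\fp\mapsto\fp^{\prime_f}$: this is precisely the containment argument with $0\times J$, which relies on $J$ being a nonzero proper ideal (so that some $j\in J\setminus\fq$ exists for $\fq\notin\V(J)$). Once this is established, everything else is a routine application of the finite-ring criterion relating ``$\Z$ is an ideal'' to ``unique prime ideal.''
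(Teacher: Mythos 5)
Your proof is correct and takes essentially the same route as the paper's: both reduce the lemma, via the finite-ring criterion stated just before it, to counting prime ideals, and then read off $\Spec(\rr)$ from Remark \ref{spec}, using that primes of type $\fp^{\prime_f}$ never coincide with primes of type $\overline{\fq}^f$. The only difference is one of detail: you explicitly justify the disjointness of the two prime families (via the ideal $0\times J$) and the injectivity of $\fp\mapsto\fp^{\prime_f}$ (as a preimage under the projection), facts the paper asserts without proof.
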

\begin{proof}
    $(\Longrightarrow)$ If $\Z (R)$ is not an ideal, then $R$ has at least
	two prime ideals and so does $\rr$. Thus 
	$\Z (\rr)$ is not an ideal.
	If $J\nsubseteq \Nil (S)$, then $\rr$ 
	has at least one prime ideal of the 
	form $\overline{\fq}^f$, by Remark \ref{spec}.
	Note that $\rr$ always
	has at least one prime ideal of the form $\fp^{\prime_f}$
	and $\fp^{\prime_f}\neq \overline{\fq}^f$.
	Thus $\rr$ has at least
	two prime ideals and so $\Z (\rr)$ is not an ideal.
	
	$(\Longleftarrow)$ By Remark \ref{spec}, $\rr$ do not have any 
	prime ideal of the form $\overline{\fq}^f$, since
	$J\subseteq \Nil (S)$.
	Also $\rr$ has only one prime ideal of
	the form $\fp^{\prime_f}$ Since, by assumption, $R$ has 
	only one prime ideal. Thus $\Z (\rr)$ is an ideal.	
\end{proof}

The following lemma, which is of independent
interest, has important role in the proof of
the main result of this section. The proof of
 this lemma
 is similar to 
the final part of the proof of Theorem \ref{id 3.9}
 and so it is omitted. Notice that 
 for a finite ring $R$, if $\Z (R)$ is
  an ideal, then $\Z (R)=\Nil (R)$.
\begin{lem}\label{finite}
	Let $R$ be  a finite ring and
	$\Z (R)$ be an ideal of $R$.
	Then $\diam \Gamma (R) \leq 2$.
\end{lem}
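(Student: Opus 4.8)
The plan is to reduce the statement to the power-manipulation argument that already appears at the end of the proof of Theorem \ref{id 3.9}. First I would record the structural consequence of the hypotheses: since $R$ is finite and $\Z(R)$ is an ideal, $R$ is local with $\Z(R)$ as its unique maximal ideal, and this maximal ideal is nilpotent, so $\Z(R)=\Nil(R)$ (as noted just before the lemma). In particular every vertex of $\Gamma(R)$, i.e.\ every element of $\Z(R)^\bullet$, is a nonzero nilpotent element.

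Next I would fix two distinct non-adjacent vertices $x,y\in\Z(R)^\bullet$ (if no such pair exists the graph is complete and there is nothing to prove) and aim to produce a common neighbor, i.e.\ an element $z\in\Z(R)^\bullet\setminus\{x,y\}$ with $zx=zy=0$; this yields the path $x-z-y$ of length $2$ and hence $d(x,y)=2$. Because non-adjacency means $xy\neq0$, any $z$ satisfying $zx=0$ is automatically distinct from $y$ (otherwise $xy=0$), and likewise $zy=0$ forces $z\neq x$; moreover $z$ will be a product of nilpotents, hence nilpotent, so the nonvanishing of $z$ is the only thing left to secure in order to know that $z$ is a legitimate vertex.

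The construction of $z$ is exactly the exponent bookkeeping of Theorem \ref{id 3.9}, now carried out for the commuting nilpotents $x$ and $y$ in place of $i$ and $j$. I would let $m$ be the least positive integer with $x^my^m=0$ (which exists since $xy$ is nilpotent) and then pick $n$ with $x^{m-1}y^n=0$ but $x^{m-1}y^{n-1}\neq0$. Splitting into the three cases $n>m$; $n=m$ with $x^my^{m-1}=0$; and $n=m$ with $x^my^{m-1}\neq0$ (in the last of which one introduces an analogous exponent $p$ with $x^py^{m-1}=0$ and $x^{p-1}y^{m-1}\neq0$), one checks in each case that the displayed middle term ($x^{m-1}y^{n-1}$ in the first two cases, $x^{p-1}y^{m-1}$ in the third) is nonzero and annihilates both $x$ and $y$. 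This is the computational heart of the argument, and it is the only place where genuine work occurs; everything else is formal.

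The step I expect to require the most care is verifying, in each of the three cases, that the chosen product simultaneously kills $x$ and $y$: these verifications hinge on combining the minimality of $m$ with the inequalities $n-1\ge m$ or $p-1\ge m$ to reduce the relevant products to a multiple of $x^my^m=0$. Since these calculations are identical in form to those already written out in the proof of Theorem \ref{id 3.9}, I would simply invoke that argument rather than reproduce it, which is presumably why the authors omit the proof. Once a common neighbor is found for every non-adjacent pair, $\diam\Gamma(R)\le2$ follows at once.
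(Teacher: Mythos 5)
Your proposal is correct and matches the paper's own (omitted) proof exactly: the paper simply notes that for a finite ring with $\Z(R)$ an ideal one has $\Z(R)=\Nil(R)$, and then invokes the same exponent case analysis from the final part of the proof of Theorem \ref{id 3.9} that you carry out for the commuting nilpotents $x$ and $y$. Your additional checks (that the middle vertex is nonzero, distinct from $x$ and $y$, and automatically a zero-divisor) are the right details to verify and are handled correctly.
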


 We will use the following
lemma without explicit mention.
\begin{lem}\label{min} 
	(\cite[Lemma 2.3]{a}) The following statements hold.
	\begin{itemize}
		\item [(1)] Let $\iota:R\to R\bowtie^f J$,
		defined by
		$\iota  (x)=(x,f(x))$ for every $x\in R$,
		be integral (e.g. $|\rr|<\infty$).
		If $\fp$ is a minimal prime of $R$, then $\fp^{\prime_f}$ is a minimal prime of $R\bowtie^f J$.
		\item [(2)] If a prime ideal 
		$\fq$ of $S$ is minimal  with the property $J \nsubseteq \fq$, then $\overline{\fq}^f$ is a minimal prime of $R\bowtie^f J$.
	\end{itemize}
\end{lem}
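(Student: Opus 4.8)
The plan is to exploit the two natural structure maps on $\rr$ together with the description of its spectrum in Remark \ref{spec}, handling the two parts separately since they concern the two disjoint families of primes. Throughout I write $K:=\{(0,j)\mid j\in J\}$ for the kernel of the first projection; note every prime of type $\fp^{\prime_f}$ contains $K$ (take $p=0$ in its description), while no prime of type $\overline{\fq}^f$ contains $K$ (if $j_0\in J\setminus\fq$, then $(0,j_0)\notin\overline{\fq}^f$ because $f(0)+j_0=j_0\notin\fq$).

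For part (1), I would work with the injection $\iota\colon R\to\rr$, which is injective since $\iota(x)=(x,f(x))=0$ forces $x=0$, and integral by hypothesis. First I would record the elementary contraction $\iota^{-1}(\fp^{\prime_f})=\fp$: if $(x,f(x))\in\fp^{\prime_f}$ then $x=p\in\fp$ and, comparing second coordinates, the accompanying $j\in J$ vanishes, so membership reduces exactly to $x\in\fp$. Now suppose $\mathcal{P}\subseteq\fp^{\prime_f}$ is a prime of $\rr$. Contracting along $\iota$ gives $\iota^{-1}(\mathcal{P})\subseteq\fp$, and minimality of $\fp$ forces $\iota^{-1}(\mathcal{P})=\fp=\iota^{-1}(\fp^{\prime_f})$. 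Since $\iota$ is integral, the Incomparability theorem for integral extensions (distinct comparable primes cannot lie over a common contracted prime) yields $\mathcal{P}=\fp^{\prime_f}$, so $\fp^{\prime_f}$ is minimal. The only external input here is the standard Incomparability theorem from \cite{Atiyah}.

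For part (2), I would argue directly from Remark \ref{spec}. Let $\mathcal{P}\subseteq\overline{\fq}^f$ be a prime, where $\fq$ is minimal among primes of $S$ not containing $J$. Since $\overline{\fq}^f$ does not contain $K$, neither does $\mathcal{P}$; hence $\mathcal{P}$ cannot be of type $\fp^{\prime_f}$, and so $\mathcal{P}=\overline{\fq'}^f$ for some $\fq'\in\Spec(S)\setminus\V(J)$. The key step is to deduce $\fq'\subseteq\fq$: for $s\in\fq'$ and any $j\in J$ one has $sj\in J$ and $sj\in\fq'$, so $(0,sj)\in\overline{\fq'}^f=\mathcal{P}\subseteq\overline{\fq}^f$, whence $sj\in\fq$. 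As this holds for all $j\in J$, we get $sJ\subseteq\fq$, and since $\fq$ is prime with $J\nsubseteq\fq$ it follows that $s\in\fq$. Thus $\fq'\subseteq\fq$, and minimality of $\fq$ among primes avoiding $J$ (note $\fq'$ avoids $J$) gives $\fq'=\fq$, i.e. $\mathcal{P}=\overline{\fq}^f$.

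The main obstacle, and the only place demanding a genuine idea rather than bookkeeping, is the implication $\overline{\fq'}^f\subseteq\overline{\fq}^f\Rightarrow\fq'\subseteq\fq$ in part (2); the ``multiply into $J$'' device above is exactly what makes it go through, using the hypothesis $J\nsubseteq\fq$ so that $sJ\subseteq\fq$ forces $s\in\fq$. In part (1) the corresponding difficulty is entirely delegated to Incomparability, so the only care needed there is verifying that $\iota$ is an integral injection and carrying out the contraction computation $\iota^{-1}(\fp^{\prime_f})=\fp$.
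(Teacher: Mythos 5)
The paper itself gives no proof of this lemma---it is quoted verbatim from \cite[Lemma 2.3]{a}---so the only comparison available is with the standard argument for this construction, which your write-up matches. Your proof is correct: in (1) the contraction computation $\iota^{-1}(\fp^{\prime_f})=\fp$ is right, and incomparability for the integral injection $\iota(R)\subseteq \rr$ legitimately upgrades $\iota^{-1}(\mathcal{P})=\fp$ to $\mathcal{P}=\fp^{\prime_f}$; in (2) the observation that a prime $\mathcal{P}\subseteq\overline{\fq}^f$ cannot contain $\{0\}\times J$ (hence must be of type $\overline{\fq'}^f$ by Remark \ref{spec}), combined with the ``multiply into $J$'' step giving $sJ\subseteq\fq$ and thus $s\in\fq$ since $\fq$ is prime with $J\nsubseteq\fq$, yields $\fq'\subseteq\fq$ and then equality by the minimality hypothesis on $\fq$.
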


By \cite[Theorem 2.3]{al}, for any commutative ring $A$,
$0\le \diam (\Gamma (A))\le 3$.
The main result of this section characterizes the diameter of zero-divisor graph of  amalgamated algebra  $\Gamma (R\bowtie^f J)$ 
when $\rr$ (or equivalently both $R$ and $J$) has finitely many  elements.

\begin{thm}\label{fine}
	Let $R\bowtie^f J$ 
	have finitely many  elements. Then the following hold:
	\begin{itemize}
		\item [(1)]
		$\diam (\Gamma (\rr))=0$ if and only if $R$ is a field and 
		 $|J^\bullet|=1$ with $J^2=0$.
		 Note that, when this is the case, 
		 properties (a), (b), (c), and (d) hold.
		\item [(2)]
		$\diam (\Gamma (\rr))=1$ if and only if
		$\rr \cong \mathbb{Z}_{2}\times \mathbb{Z}_{2}$, or
		properties (a), (b), (c), and (d) hold and 
		$\diam (\Gamma (\rr))>0$.	
		\item [(3)]
		$\diam (\Gamma (\rr))=2$ if and only if $\diam (\Gamma (\rr))>1$ 
		and either (i) $\Z (R)$ 
		is an ideal and $J\subseteq \Nil (S)$, or (ii)
		$J\cap \Nil (S) = \{0\}$,
		$S$ has exactly one prime ideal $\fq$, 
		 minimal  with the property $J \nsubseteq \fq$, and
		 $R$ is a domain.
		\item [(4)]
		$\diam (\Gamma (\rr))=3$ if and only if the following hold:
		(i) $\Z (R)$ is not an ideal or $J\nsubseteq \Nil (S)$, and
		(ii) $J\cap \Nil (S) \neq \{0\}$, or 
		 $S$ has more than one  prime ideal $\fq$, 
		 minimal  with the property $J \nsubseteq \fq$, or
		 $R$ is not a domain.
	\end{itemize}
\end{thm}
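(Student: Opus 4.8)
The plan is to use the standard bound $\diam(\Gamma(A))\in\{0,1,2,3\}$ for every commutative ring $A$, together with two facts about the finite (hence Artinian) ring $A:=\rr$: every non-unit is a zero-divisor, and $\Gamma(\rr)$ is non-empty because $J\neq 0$ forces each $(0,j)$ with $j\in J^\bullet$ to be a non-unit, hence a non-zero zero-divisor. Deciding the diameter then means deciding which of $0,1,2,3$ occurs, and I will treat the cases in the order $(1),(2),(4),(3)$, obtaining $(3)$ at the end by elimination since the four conditions on the right are set up to be exhaustive and, within $\diam>1$, complementary.

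For part $(1)$, note that $\diam=0$ is equivalent to $\Gamma(A)$ having exactly one vertex. For the forward direction I first exclude $R$ being a non-field: if $r\in\Z(R)^\bullet$ with $rs=0$, $s\neq0$, then $(r,f(r))(s,f(s))=(0,f(rs))=0$ gives a second vertex $(r,f(r))\neq(0,j_0)$. Hence $R$ is a field, so $f$ is injective (as $\ker f\neq R$); distinct $j$ give distinct vertices $(0,j)$, forcing $|J^\bullet|=1$; and if $J^2\neq 0$ the idempotent $j_0$ yields the complementary vertex $(1,1-j_0)$, so $J^2=0$. Conversely, when $R$ is a field with $|J^\bullet|=1$ and $J^2=0$, properties (a)--(d) hold (property (b) automatically, since $f(r)$ is a unit for $r\neq0$), whence $\Z(A)^2=0$ by Theorem \ref{cm}, and using condition $(\star)$ (Lemma \ref{star}) one checks $\Z(A)^\bullet=\{(0,j):j\in J^\bullet\}$ is a single vertex. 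Part $(2)$ then reduces to the completeness characterization (Proposition \ref{cmf}, i.e. Theorem \ref{cm} together with the exceptional ring $\mathbb{Z}_2\times\mathbb{Z}_2$): $\Gamma(A)$ is complete exactly when $A\cong\mathbb{Z}_2\times\mathbb{Z}_2$ or (a)--(d) hold. Since $\diam=1$ means complete with at least two vertices, it is equivalent to this together with $\diam>0$, which is the stated disjunction; parts $(1)$ and $(2)$ settle $\diam\le1$.

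The heart of the argument is part $(4)$, and the key idea is to translate the ring-theoretic hypotheses into the factor structure of $A\cong\prod_{i=1}^{t}A_i$ as a product of local rings. Write $\mathcal{A}$ for ``$\Z(R)$ is an ideal and $J\subseteq\Nil(S)$'' and $\mathcal{B}$ for ``$R$ is a domain, $J\cap\Nil(S)=\{0\}$, and $S$ has exactly one prime minimal with $J\nsubseteq\fq$.'' By Lemma \ref{3}, $\mathcal{A}$ is equivalent to $\Z(\rr)$ being an ideal, i.e. to $A$ being local ($t=1$); thus $(4)(i)=\neg\mathcal{A}$ says $t\ge2$. Using Remark \ref{spec} and Lemma \ref{min} I will match the $\fp^{\prime_f}$- and $\overline{\fq}^f$-type primes with the local factors and show that $\mathcal{B}$ is equivalent to $A$ being a product of exactly two fields ($t=2$, both factors fields): here $R$ domain gives $\fp=0$, the unique $\fq$ gives one $\overline{\fq}^f$, and $J\cap\Nil(S)=\{0\}$ (with $\Nil(R)=0$) forces $\Nil(A)=0$, i.e. reducedness. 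Granting this dictionary, the forward implication of $(4)$ is short: $\mathcal{A}$ gives $t=1$, hence $\diam\le2$ by Lemma \ref{finite} applied to the finite ring $A$; and $\mathcal{B}$ gives the complete bipartite graph of a product of two fields, of diameter $\le2$. So $\diam=3\Rightarrow\neg\mathcal{A}\wedge\neg\mathcal{B}$.

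The backward implication $\neg\mathcal{A}\wedge\neg\mathcal{B}\Rightarrow\diam=3$ is the \emph{main obstacle}. Via the dictionary it becomes the purely combinatorial claim that for finite $A$ with $t\ge2$ that is not a product of two fields there is a pair at distance $3$, and I will produce it by explicit idempotent constructions. If some factor $A_1$ is not a field, choose $0\neq n\in\fm_1$ and set $u=(n,1,\dots,1)$, $v=(1,0,\dots,0)$; the annihilator computation shows $uv\neq0$ and that $u,v$ share no neighbour, while $u-(n^{k-1},0,\dots,0)-(0,1,\dots,1)-v$ (for $n^k=0$, $n^{k-1}\neq0$) is a path of length $3$. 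If all factors are fields and $t\ge3$, set $u=(1,0,1,\dots,1)$, $v=(0,1,1,\dots,1)$, with path $u-(0,1,0,\dots,0)-(1,0,\dots,0)-v$. These two cases cover $\neg\mathcal{A}\wedge\neg\mathcal{B}$, giving $\diam=3$ exactly there. Finally part $(3)$ follows by elimination: since $\diam\in\{0,1,2,3\}$ and parts $(1),(2),(4)$ pin down the values $0,1,3$, the case $\diam=2$ is precisely ``$\diam>1$ and ($\mathcal{A}$ or $\mathcal{B}$),'' as stated. The delicate points will be the prime-to-factor dictionary (ensuring the number of $\overline{\fq}^f$'s equals the number of minimal primes $\fq$ with $J\nsubseteq\fq$, and that $J\cap\Nil(S)=\{0\}$ forces the relevant factors to be reduced) and checking that the exhibited paths cannot be shortened.
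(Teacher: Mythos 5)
Your strategy for the core of the theorem, parts (3) and (4), is correct but genuinely different from the paper's. The paper proves only the ``if'' directions of (3) and (4) and recovers the converses by exhaustiveness, and its distance-$3$ arguments are entirely outsourced to Lucas's results \cite[Theorem 2.2 and Corollary 2.5]{l} (reduced/non-reduced rings and counts of minimal primes), fed by Remark \ref{spec}, Lemma \ref{3}, Lemma \ref{min} and \cite[Proposition 5.4]{DFF}. You instead pass to the Artinian decomposition $\rr\cong\prod_{i=1}^{t}A_i$ and exhibit explicit pairs at distance $3$, with explicit length-$3$ paths, when $t\ge 2$ and some factor is non-reduced, or when $t\ge 3$ and all factors are fields; I checked these annihilator computations and they are correct, so your $\diam =3$ half is self-contained where the paper's is citation-driven. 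The price is your ``dictionary'': $\mathcal A\Leftrightarrow\rr$ local is exactly Lemma \ref{3} plus the finite-ring remark opening Section 5, but $\mathcal B\Leftrightarrow\rr$ is a product of two fields needs two facts you only gesture at: (i) the map $\fq\mapsto\overline{\fq}^f$ is injective on $\Spec(S)\setminus\V(J)$ (if $\fq_1$ and $\fq_2$ have the same contraction to $f(R)+J$, pick $j_0\in J$ outside that contraction; then $s\in\fq_1$ gives $sj_0\in\fq_1\cap J\subseteq\fq_2$, whence $s\in\fq_2$, and symmetrically); and (ii) since $J$ is finite, every prime of $S$ not containing $J$ contains $\Ann_S(J)$, so these primes are pairwise incomparable and each is automatically minimal for the property. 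The paper uses the same prime-counting silently, so this is a shared burden, but in your argument both directions of the dictionary are load-bearing (the backward implication of (4) needs ``product of two fields $\Rightarrow\mathcal B$''), so it must actually be carried out. Note also that your part (3)-by-elimination really only needs part (4), since $\diam>1$ already excludes the values $0$ and $1$.

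The one genuine gap is part (2). You invoke Proposition \ref{cmf}, but in the paper that proposition is stated without proof and explicitly deferred to the present theorem, so using it here is circular; and your parenthetical repair --- ``Theorem \ref{cm} together with the exceptional ring'' --- does not go through, because the implication $(1)\Rightarrow(2)$ of Theorem \ref{cm} requires both condition $(\star)$ and $\Gamma(R)\neq\emptyset$. The latter genuinely fails in relevant cases: for $R=\mathbb{Z}_2$, $S=\mathbb{Z}_2[x,y]/(x,y)^2$, $J=(x,y)S$, the ring $\rr$ is finite with $\diam(\Gamma(\rr))=1$ and $\rr\ncong\mathbb{Z}_2\times\mathbb{Z}_2$, yet $\Gamma(R)=\emptyset$, so Theorem \ref{cm} cannot be applied to extract (a)--(d). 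The fix is the paper's route: if $\diam(\Gamma(\rr))=1$ and $\rr\ncong\mathbb{Z}_2\times\mathbb{Z}_2$, then $\Z(\rr)^2=0$ by \cite[Theorem 2.8]{al}; from this (a)--(d) follow directly, since in the finite ring $\rr$ every $(0,j)$ with $j\in J$ and every $(r,f(r))$ with $r\in\Z(R)$ is a zero-divisor (non-units are zero-divisors), which yields (a), (c), (d) by multiplying such elements, and (b) by the $r^2=0$ contradiction as in Proposition \ref{cm3}. With that repair your parts (1), (3) and (4) stand as written.
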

\begin{proof}
	(1) Assume that $\diam (\Gamma (\rr))=0$, i.e. $|\Z (\rr)^\bullet|=1$.
	Then, by the comments in the beginning of 
	this section, $\rr$ has exactly one prime
	 ideal, $\fm^{\prime_f}$, hence
	 $J\subseteq \Nil (S)$. 
	 When this is the case, every element of $\fm$ and $J$
	 gives a zero-divisor in $\rr$, by Remark \ref{zdd} and
	 Lemma \ref{star}. 
	 It follows easily that
	$\fm =0$, and $|J^\bullet|=1$ with $J^2=0$ (if
	$J=\{0,j\}$ and $j^2\neq 0$, then $j^2=j$ and so $j^n\neq 0$, for any $n\in \mathbb N$).
	The  converse statement is clear.
	
	(2) Let $\diam (\Gamma (\rr))=1$ and
	$\rr \ncong \mathbb{Z}_{2}\times \mathbb{Z}_{2}$.
	Then, by \cite[Theorem 2.8]{al}, $\Z(\rr)^2=0$, and so
	$\Z(\rr)$ is an ideal. Thus $\Z(\rr)$ is the only
	prime ideal of $\rr$, call it $\fm^{\prime_f}$.
	Hence $\fm ^2=0$ and $J^2=0$.
	Proposition \ref{cm3} now completes the proof.

	We now prove ``if" direction of parts (3) and (4).
	Then, ``only if" direction follows, since
	conditions of parts  (1)-(4) are logically
	disjoint. To see this, note that conditions of part (4)
	are logical opposite of ones of part (3).
	Note also that if part (4) holds, then either
	$\Z (R)$ is not an ideal, which implies property (a) not
	holds, or $J\nsubseteq \Nil (S)$, and so property (d) not
	holds. One can easily deduce from Remark \ref{5} that
	if part (4) holds, then 
	$\rr \ncong \mathbb{Z}_{2}\times \mathbb{Z}_{2}$
	
	(3)  Assume that $\diam (\Gamma (\rr))\geq 2$.
	We prove the converse inequality in each case:\\	 
	(i) Let $\Z (R)$ 
	be an ideal and $J\subseteq \Nil (S)$.
	In view of Lemma \ref{3}, $\Z (\rr)$ is an ideal, 
	and so, by Lemma \ref{finite},
	$\diam (\Gamma (\rr))\leq 2$.\\
	(ii) By Lemma \ref{3}, $\Z (\rr)$ is not an ideal,
	and by \cite[Proposition 5.4]{DFF}, $\rr$ is reduced.
	Let $\fq$ be the only  prime ideal of 
	$S$ with the given property. Then $\overline{\fq}^f$ and $0^{\prime_f}$
	are the only minimal prime ideals of $\rr$.
	Thus, by \cite[Theorem 2.2]{l}, 
	$\diam (\Gamma (\rr))\leq 2$.
	
	(4) Assume that $\Z (R)$ is not an ideal
	or $J\nsubseteq \Nil (S)$. By Lemma
	\ref{3}, $\Z (\rr)$ is not an ideal.\\
	Let $J\cap \Nil (S) \neq \{0\}$. Then, by
	\cite[Proposition 5.4]{DFF}, $\rr$ is a non-reduced ring.
	Hence, by  \cite[Corollary 2.5]{l},
	$\diam (\Gamma (\rr))=3$.\\
	Let $S$ have more than one 
	 prime ideal  with the given property. Then $\rr$ has 
	 at least two  minimal prime ideal of 
	 the form $\overline{\fq}^f$ and one of the form
	 $\fp^{\prime_f}$. Thus, by
	 \cite[Theorem 2.2 and Corollary 2.5]{l},
	 $\diam (\Gamma (\rr))=3$.\\
	Let $R$  not be a domain.
	By  \cite[Corollary 2.5]{l}, we may assume that
	$\rr$ is reduced, i.e. $R$ is reduced
	 and $J\cap \Nil (S)=\{0\}$. 
	 Then  $R$  has at least two minimal prime ideals, which
	 give rise to two minimal prime ideals in $\rr$.
	 Also, $\rr$  has at least one minimal prime ideal
	 of the form $\overline{\fq}^f$.
	 Thus, by
	 \cite[Theorem 2.2]{l}, $\diam (\Gamma (\rr))=3$.
\end{proof}

\begin{rem}\label{5}
\textnormal{
In the part (2) of the above theorem we separate the
case $\rr \cong \mathbb{Z}_{2}\times \mathbb{Z}_{2}$.	
In fact, when this is the case,  properties (a), (b), and (c)  hold, but $J^2\neq 0$. To see this, let 
$\rr \cong \mathbb{Z}_{2}\times \mathbb{Z}_{2}$. Then
$\rr$ is reduced with exactly two prime ideals.
Thus, by \cite[Proposition 5.4]{DFF}, 
$R$ is reduced and $J\cap \Nil (S)=\{0\}$. Hence
$\rr$ has 
one prime ideal of the form $\overline{\fq}^f$
and one  of the form $\fp^{\prime_f}$. Therefore
$R$ is a field since it is a reduced ring
with only one prime ideal.  Thus properties (a), (b), and (c)  hold.
}
\end{rem}

We can now deduce the following corollary from
 Theorem \ref{fine}. The corollary shows that one can
easily construct examples of the zero-divisor graph of 
amalgamations of diameter $3$ by
imposing conditions on $R$ only or on $J$ only.
\begin{cor}\label{sade}
	Let $R\bowtie^f J$ 
	have finitely many  elements. Then the following hold:
	\begin{itemize}
		\item [(1)] If $\Z (R)$ is not an ideal, 
		then $\diam (\Gamma (\rr))=3$.
		\item [(2)] If $J\nsubseteq \Nil (S)$ and
		 $J\cap \Nil (S) \neq \{0\}$, 
		 then $\diam (\Gamma (\rr))=3$.
	\end{itemize}
\end{cor}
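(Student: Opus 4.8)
The plan is to derive both statements directly from the characterization of diameter $3$ furnished by Theorem \ref{fine}(4). That part asserts that $\diam(\Gamma(\rr))=3$ holds \emph{precisely} when two conditions hold simultaneously: (i) $\Z(R)$ is not an ideal or $J\nsubseteq\Nil(S)$; and (ii) $J\cap\Nil(S)\neq\{0\}$, or $S$ has more than one prime ideal minimal with the property $J\nsubseteq\fq$, or $R$ is not a domain. Consequently, for each part of the corollary it suffices to check that the stated hypotheses force both (i) and (ii); the conclusion then follows immediately, with no separate argument needed to rule out smaller diameters, since the equivalence in Theorem \ref{fine}(4) already delivers equality.

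For part (1), I would first observe that the hypothesis that $\Z(R)$ is not an ideal supplies the first disjunct of condition (i) at once. The single step requiring an argument is condition (ii), which I plan to secure through its third disjunct, ``$R$ is not a domain''. Here the key remark is that finiteness of $\rr$ forces $R$ to be finite, and a finite integral domain is a field; a field has $\Z(R)=\{0\}$, which is an ideal. Reading this contrapositively, if $\Z(R)$ is not an ideal then $R$ cannot be a domain, so (ii) holds. With (i) and (ii) both verified, Theorem \ref{fine}(4) yields $\diam(\Gamma(\rr))=3$.

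For part (2), the verification is even more transparent, being a direct match against the disjunctions: the hypothesis $J\nsubseteq\Nil(S)$ is the second disjunct of (i), and the hypothesis $J\cap\Nil(S)\neq\{0\}$ is the first disjunct of (ii). Hence both conditions of Theorem \ref{fine}(4) hold and we again conclude $\diam(\Gamma(\rr))=3$.

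The only step that is not pure bookkeeping against the disjunctions of Theorem \ref{fine}(4) is the observation in part (1) that finiteness collapses every integral domain to a field, thereby converting ``$\Z(R)$ not an ideal'' into ``$R$ not a domain''. I expect this to be the sole (and quite minor) obstacle; everything else amounts to matching each hypothesis with the appropriate clause of the characterization.
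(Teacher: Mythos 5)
Your proposal is correct and is exactly the deduction the paper intends: the corollary is stated as an immediate consequence of Theorem \ref{fine}(4), with part (2) being pure bookkeeping against the disjunctions and part (1) using precisely your observation that finiteness of $\rr$ makes $R$ finite, so ``$\Z(R)$ not an ideal'' forces ``$R$ not a domain'', supplying condition (ii). Nothing is missing; the argument matches the paper's (implicit) proof.
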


\begin{cor}
	Let $m,n\in \Bbb N$ with $n|m$ and $n\geq 2$. 
	Assume that $m$ is not a prime number.
	Then $\diam (\Gamma (\mathbb{Z}_{m}\times \mathbb{Z}_{n}))=3$.
\end{cor}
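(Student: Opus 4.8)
The plan is to realize $\mathbb{Z}_m\times\mathbb{Z}_n$ as an amalgamated algebra and then read off the diameter from Theorem \ref{fine}(4). Since $n\mid m$, the reduction map $\pi:\mathbb{Z}_m\to\mathbb{Z}_n$ is a well-defined ring epimorphism, and I would set $R=\mathbb{Z}_m$, $S=\mathbb{Z}_m\times\mathbb{Z}_n$, let $f:R\to S$ be $f(r)=(r,\pi(r))$, and take $J=\{0\}\times\mathbb{Z}_n$. Then $J$ is a non-zero (as $n\ge 2$) proper (as $\mathbb{Z}_m\neq 0$) ideal of $S$, and $\rr$ is finite. The first step is to check the isomorphism $\rr\cong\mathbb{Z}_m\times\mathbb{Z}_n$: a typical element of $\rr$ has the form $(r,f(r)+(0,t))=(r,(r,\pi(r)+t))$ with $r\in\mathbb{Z}_m$, $t\in\mathbb{Z}_n$, and as $t$ ranges over $\mathbb{Z}_n$ the entry $\pi(r)+t$ ranges over all of $\mathbb{Z}_n$, so $\rr=\{(r,(r,s))\mid r\in\mathbb{Z}_m,\ s\in\mathbb{Z}_n\}$; the projection $(r,(r,s))\mapsto(r,s)$ is then a ring isomorphism onto $\mathbb{Z}_m\times\mathbb{Z}_n$.

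Next I would verify the two conditions of Theorem \ref{fine}(4). For condition (i) it suffices to observe that $J\nsubseteq\Nil(S)$: indeed $\Nil(S)=\Nil(\mathbb{Z}_m)\times\Nil(\mathbb{Z}_n)$, while $(0,1)\in J$ satisfies $(0,1)^k=(0,1)\neq 0$ for all $k$ and so is not nilpotent. For condition (ii) I would invoke the clause ``$R$ is not a domain'': since $m\ge n\ge 2$ and $m$ is not prime, $m$ is composite, whence $R=\mathbb{Z}_m$ has zero-divisors and is not a domain. With (i) and (ii) in hand, Theorem \ref{fine}(4) yields $\diam(\Gamma(\rr))=3$, that is, $\diam(\Gamma(\mathbb{Z}_m\times\mathbb{Z}_n))=3$.

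The only genuinely delicate point is the bookkeeping in the first paragraph, namely confirming that $\{0\}\times\mathbb{Z}_n$ is the right ideal and that the amalgam collapses to the full product rather than to a proper subring. I would also flag that one cannot simply appeal to Corollary \ref{sade}(1): when $m=p^k$ is a prime power the set $\Z(\mathbb{Z}_m)$ \emph{is} an ideal, so the argument genuinely needs the ``$R$ is not a domain'' alternative of condition (ii) in Theorem \ref{fine}(4) rather than the failure of $\Z(R)$ to be an ideal. Finally, the hypothesis that $m$ is not prime is essential: were $m$ prime, then $n\mid m$ with $n\ge 2$ would force $n=m$, and $\mathbb{Z}_m\times\mathbb{Z}_m$ has diameter $1$ (for $m=2$) or $2$ (for $m\ge 3$), so the conclusion would fail.
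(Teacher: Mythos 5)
Your proof is correct, and it diverges from the paper's own argument precisely at the step where the paper's argument is shaky. The setup is identical: both you and the paper realize $\mathbb{Z}_m\times\mathbb{Z}_n$ as $\rr$ with $R=\mathbb{Z}_m$, $S=\mathbb{Z}_m\times\mathbb{Z}_n$, $J=\{0\}\times\mathbb{Z}_n$ and the natural $f$ (the paper obtains the isomorphism from $f^{-1}(J)=0$, hence $\rr\cong f(R)+J=\mathbb{Z}_m\times\mathbb{Z}_n$, while you verify it by hand; both are fine). The difference is the final step: the paper invokes Corollary \ref{sade}, asserting that $\Z(\mathbb{Z}_m)$ is not an ideal because $m$ is not prime. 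By the paper's own criterion (the zero-divisors of a finite ring form an ideal if and only if the ring has a unique prime ideal), that assertion fails whenever $m$ is a proper prime power: for instance $\Z(\mathbb{Z}_4)=\{0,2\}$ \emph{is} an ideal, and Corollary \ref{sade}(2) does not rescue the case $m=4$, $n=2$ either, since there $J\cap\Nil(S)=\{0\}$. Your route through Theorem \ref{fine}(4) --- condition (i) via $J\nsubseteq\Nil(S)$, as $(0,1)$ is a nonzero idempotent, and condition (ii) via $R=\mathbb{Z}_m$ not being a domain --- is insensitive to whether $m$ is a prime power and therefore covers every case of the statement. In short, the cautionary remark in your last paragraph, that one cannot simply appeal to Corollary \ref{sade}(1), is exactly the point: your argument proves the corollary in full generality, whereas the paper's own justification has a gap for prime-power $m$.
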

\begin{proof}
   Let $R=\mathbb{Z}_m$, $S=\mathbb{Z}_{m}\times \mathbb{Z}_{n}$, and
   $J$ be the ideal generated by $(0,1)$. Let $f:R\to S$
   be the natural homomorphism. Then
   $f^{-1}(J)=0$,
   and so $\rr \cong f(R)+J=\mathbb{Z}_{m}\times \mathbb{Z}_{n}$. 
   It follows from the comments at the beginning of this section
   that $\Z (\mathbb{Z}_{m})$ is not an ideal. Thus, by 
   Corollary \ref{sade},  we have 
    $\diam (\Gamma (\mathbb{Z}_{m}\times \mathbb{Z}_{n}))=3$. 
\end{proof}

We now make explicit how our results generalize and
extend previous results and examples. In each of these examples,
the homomorphism
$f: R \to S$ is defined naturally.
\begin{exam}
	\begin{itemize}

		\item [(1)] (\cite[Example 3.1]{as06})
		$\diam (\Gamma (\mathbb{Z}_{9} \ltimes \mathbb{Z}_{9}))=2$;
		since $\Z (R)$ 
		is an ideal and $J\subseteq \Nil (S)$.
		\item [(2)] (\cite[Example 3.5]{as06})
		$\diam (\Gamma (\mathbb{Z}_{8} \ltimes \mathbb{Z}_{2}))=2$;
		since $\Z (R)$ 
		is an ideal and $J\subseteq \Nil (S)$.
		\item [(3)] (\cite[Example 3.6]{as06}, \cite[Example 3.8]{a})
		$\diam (\Gamma (\mathbb{Z}_{6} \ltimes \mathbb{Z}_{6}))=3$;
		since $\Z (R)$ is not an ideal.
		\item [(4)] (\cite[Example 3.8]{as06})
		$\diam (\Gamma ((\mathbb{Z}_{6} \times \mathbb{Z}_{6}) \ltimes \mathbb{Z}_{6}))=3$;
		since $\Z (R)$ is not an ideal.
		\item [(5)] (\cite[Example 3.5]{a})	
		Let $R=\mathbb{Z}_{12}$, $S=\mathbb{Z}_{6}\times \mathbb{Z}_{6}$, and  $J$ be any ideal of $S$. Then
		$\diam (\Gamma (R\bowtie^f J))=3$;
		since $\Z (R)$ is not an ideal.
		\item [(6)] (\cite[Example 3.6]{a})		
		Let $R=\mathbb{Z}_{6}$, $S=\mathbb{Z}_{6}[X]/(X^4)$,
		and $J=(x)S$. Then
		$\diam (\Gamma (R\bowtie^f J))=3$;
		since $\Z (R)$ is not an ideal.
		\item [(7)] (\cite[Example 3.7]{a})		
		Let $R=\mathbb{Z}_{6}$, and $I=\{0,2,4\}$. Then
		$\diam (\Gamma (R\bowtie I))=3$;
		since $\Z (R)$ is not an ideal.
		\item [(8)] (\cite[Example 3.9]{a})		
		Let $R=\mathbb{Z}_{9}$ and  $M=\mathbb{Z}_{9}$. 
		Then, by part (1) of the above theorem,
		$\diam (\Gamma (R\bowtie \Z(R)))=1$, and
		$\diam (\Gamma (R\ltimes M))=2$, as before.		
		\item [(9)] (\cite[Example 3.16]{a}) 
		Let $R=\mathbb{Z}_{8}$, $S=\mathbb{Z}_{4}$, and
		$J=\{0,2\}$. Then
		$\diam (\Gamma (R\bowtie^f J))=2$; since $\Z (R)$ 
		is an ideal and $J\subseteq \Nil (S)$.
		\item [(10)] (\cite[Example 4.1]{y})	
		Let $R= \mathbb{Z}_{2} \times \mathbb{Z}_{2}$ 
		and $I= \mathbb{Z}_{2} \times \{0\}$. Then 
		$\diam (\Gamma (R\bowtie I))=3$;
		since $\Z (R)$ is not an ideal.
		\item [(11)] (\cite[Example 4.2]{y})		
		Let $R=\mathbb{Z}_{6}$, and $I=\{0,3\}$. Then
		$\diam (\Gamma (R\bowtie I))=3$;
		since $\Z (R)$ is not an ideal.	
		\item [(12)] (\cite[Example 4.2]{y})		
		Let $R=\mathbb{Z}_{8}$, and $I=\{0,4\}$. Then
		$\diam (\Gamma (R\bowtie I))=2$; since $\Z (R)$ 
		is an ideal and $J\subseteq \Nil (S)$.	
		\item [(13)] (\cite[Example 2.3]{k})		
		Let $R=\mathbb{Z}_{4}$, $S=\mathbb{Z}_{2}\times \mathbb{Z}_{2}[X]/(X^2)$, and $J=(0,x)S$.
		Then, by part (1) of the above theorem,
		$\diam (\Gamma (R\bowtie^f J))=1$.
		\item [(14)] (\cite[Example 3.12]{k})		
		Let $R= \mathbb{Z}_{2} \times \mathbb{Z}_{4}$,
		$S= \mathbb{Z}_{2} \times \mathbb{Z}_{2}$, and
		$J= \{0\} \times  \mathbb{Z}_{2}$. Then
		$\diam (\Gamma (R\bowtie^f J))=3$;
		since $\Z (R)$ is not an ideal.
		\item [(15)] (\cite[Example 3.13]{k})		
		Let $R= \mathbb{Z}_{6} \times \mathbb{Z}_{3}$,
		$S= \mathbb{Z}_{3} \times \mathbb{Z}_{3}$, and
		$J= \{0\} \times  \mathbb{Z}_{3}$. Then
		$\diam (\Gamma (R\bowtie^f J))=3$;
		since $\Z (R)$ is not an ideal.
		\item [(16)] (\cite[Example 3.14]{k})		
		Let $R=\mathbb{Z}_{4}[X]/(X^2)$, 
		$S= \mathbb{Z}_{4} \times \mathbb{Z}_{4}$, 
		$J= \{0\} \times  \mathbb{Z}_{4}$, and
		$f: R \to S$ be the ring
		homomorphism defined by $f(a+bx)=(a,a)$. Then,
		one can deduce from either part (1) or part (2)
		of Corollary \ref{sade} that
		$\diam (\Gamma (R\bowtie^f J))=3$.
		\item [(17)] (\cite[Example 3.15]{k})			
		Let $R=\mathbb{Z}_{6}$, $S=\mathbb{Z}_{2}[X]/(X^3)$, and $J=xS$. Then
		$\diam (\Gamma (R\bowtie^f J))=3$;
		since $\Z (R)$ is not an ideal.
		\end{itemize}
\end{exam}

\end{document}